\documentclass[11pt,reqno]{amsart}

\usepackage[T1]{fontenc}
\usepackage[a4paper,margin=1.15in]{geometry}
\usepackage{amsmath,amssymb,amsthm,mathtools}
\usepackage{enumitem}
\usepackage{microtype}
\usepackage{xcolor}
\usepackage{aliascnt}
\usepackage[colorlinks=true,linkcolor=blue!50!black,citecolor=blue!50!black,urlcolor=blue!50!black]{hyperref}
\usepackage[nameinlink,noabbrev,capitalise]{cleveref}
\numberwithin{equation}{section}

\theoremstyle{plain}
\newtheorem{theorem}{Theorem}[section]
\crefname{theorem}{Theorem}{Theorems}
\Crefname{theorem}{Theorem}{Theorems}

\newaliascnt{observation}{theorem}
\newtheorem{observation}[observation]{Observation}
\aliascntresetthe{observation}
\crefname{observation}{Observation}{Observations}
\Crefname{observation}{Observation}{Observations}

\newaliascnt{lemma}{theorem}
\newtheorem{lemma}[lemma]{Lemma}
\aliascntresetthe{lemma}
\crefname{lemma}{Lemma}{Lemmas}
\Crefname{lemma}{Lemma}{Lemmas}

\newaliascnt{proposition}{theorem}
\newtheorem{proposition}[proposition]{Proposition}
\aliascntresetthe{proposition}
\crefname{proposition}{Proposition}{Propositions}
\Crefname{proposition}{Proposition}{Propositions}

\newaliascnt{conjecture}{theorem}
\newtheorem{conjecture}[conjecture]{Conjecture}
\aliascntresetthe{conjecture}
\crefname{conjecture}{Conjecture}{Conjectures}
\Crefname{conjecture}{Conjecture}{Conjectures}

\theoremstyle{definition}
\newaliascnt{definition}{theorem}
\newtheorem{definition}[definition]{Definition}
\aliascntresetthe{definition}
\crefname{definition}{Definition}{Definitions}
\Crefname{definition}{Definition}{Definitions}

\crefname{section}{Section}{Sections}
\Crefname{section}{Section}{Sections}
\crefname{subsection}{Subsection}{Subsections}
\Crefname{subsection}{Subsection}{Subsections}
\crefname{equation}{Equation}{Equations}
\Crefname{equation}{Equation}{Equations}

\newcommand{\calB}{\mathcal B}
\newcommand{\calF}{\mathcal F}
\newcommand{\calH}{\mathcal H}
\newcommand{\calM}{\mathcal M}
\newcommand{\calQ}{\mathcal Q}
\newcommand{\calR}{\mathcal R}
\newcommand{\ZZ}{\mathbb Z}
\newcommand{\eps}{\varepsilon}
\DeclareMathOperator{\supp}{supp}
\DeclareMathOperator{\ex}{ex}

\begin{document}

\title[\texorpdfstring{$\ell$}{ell}-degree Tur\'an densities of hypergraphs]{Rational values and non-principality for \texorpdfstring{$\ell$}{ell}-degree Tur\'an densities of hypergraphs}
	
\author{Quanyu Tang}
\address{School of Mathematics and Statistics, Xi'an Jiaotong University, Xi'an, China}
\email{tangquanyu827@gmail.com}
	
\author{Wenling Zhou}
\address{School of Mathematics, Shandong University, Jinan, China}
\email{gracezhou@sdu.edu.cn}

\begin{abstract}
Let $k>\ell\ge 1$ be integers. For a family $\mathcal{F}$ of $k$-uniform hypergraphs ($k$-graphs), the \emph{$\ell$-degree Tur\'an density} $\gamma^{(k)}_\ell(\mathcal{F})$ of $\mathcal{F}$ is defined as the asymptotic maximum of the normalized minimum $\ell$-degree over all $\mathcal{F}$-free $k$-graphs.

In this paper, we prove that for all $k>\ell>k/2$, every rational number $\alpha\in[0,1)$ can be realized as the $\ell$-degree Tur\'an density $\gamma^{(k)}_\ell(\mathcal{F})$ for some finite family $\mathcal{F}$ of $k$-graphs. Furthermore, for any $k>\ell\ge 1$, we construct an explicit infinite sequence of values realized by single forbidden $k$-graphs: for each integer $q\ge2$, there exists a $k$-graph $F$ such that $\gamma^{(k)}_\ell(F)=1-q^{\ell-k}$.
We also establish a strengthened non-principality property for $\ell$-degree Tur\'an densities: for all  $k>\ell>1$, there exist two $k$-graphs $F_1$ and $F_2$ satisfying $0< \gamma^{(k)}_{\ell}(\{F_1,F_2\})<
\min\{ \gamma^{(k)}_\ell(F_1),\gamma^{(k)}_\ell(F_2)\}$.
\end{abstract}

\keywords{Hypergraph Tur\'an problem, degree Tur\'an density, non-principal family}

\maketitle

\section{Introduction}\label{sec:intro}
	
	Extremal graph theory asks how dense a large discrete structure can be when a prescribed configuration is forbidden.
	For an integer $k\ge 2$, a \emph{$k$-uniform hypergraph}, or simply a \emph{$k$-graph}, is a pair $H=(V,E)$ with $E\subseteq \binom{V}{k}$.  Given a (possibly infinite) family $\calF$ of $k$-graphs, the classical Tur\'an number $\ex(n,\calF)$ is the maximum number of edges in an $n$-vertex $\calF$-free $k$-graph, where $\calF$-free means containing no member of $\calF$ as a subgraph.  A standard averaging argument shows that the sequence $\ex(n, \mathcal{F})/\binom{n}{k}$ is non-increasing in $n$.
	Therefore, one often focuses on the \emph{Tur\'an density} of $\mathcal{F}$, defined by
	\[
	\pi(\mathcal{F})=\lim_{n\to \infty}{\ex(n,\mathcal{F})}/{\binom{n}{k}}.
	\]
	If $\mathcal F=\{F\}$, we write $\ex(n,F)$ and $\pi(F)$.
	Throughout the paper, forbidden families are assumed to be nonempty and forbidden $k$-graphs are assumed to have at least one edge; allowing the empty family adds only the trivial value $1$.

	Let $\Pi^{(k)}$, $\Pi^{(k)}_{\rm fin}$ and $\Pi^{(k)}_{\infty}$ denote, respectively, the sets of Tur\'an densities of single $k$-graphs, finite families of $k$-graphs and arbitrary nonempty families of $k$-graphs.
	Clearly, for every $k\ge 2$, $\Pi^{(k)} \subseteq \Pi^{(k)}_{\rm fin}\subseteq \Pi^{(k)}_{\infty}$.
	For the graph case $k=2$, Erd\H{o}s--Stone--Simonovits Theorem~\cite{ErdosStone1946,ErdosSimonovits1966} determines the Tur\'an density of every graph in terms of its chromatic number, and consequently
	\begin{equation}\label{eq:classic-turan-density}
		\Pi^{(2)}_{\infty}=\Pi^{(2)}_{\rm fin}=\Pi^{(2)} =\{(r-1)/r: r\in \mathbb N^+\}. 
	\end{equation}
	For $k\ge 3$, the problem becomes considerably harder and, despite much
	effort, remains wide open even for $3$-graphs. Determining the Tur\'an density even for
	seemingly ``simple'' $3$-graphs is notoriously difficult; see the surveys of Keevash~\cite{Keevash2011} and Sidorenko~\cite{Sidorenko1995}. 
	
Hypergraphs also exhibit phenomena which are absent for graphs.  
Frankl and R\"odl~\cite{FranklRodl1984} disproved a \$1000 conjecture of Erd\H{o}s by showing that, for $k\ge3$, the set $\Pi^{(k)}_\infty$ is not well-ordered.
Pikhurko \cite{Pikhurko2012} later established a collection of fundamental results regarding the sets $\Pi^{(k)}_{\infty}$ and $\Pi^{(k)}_{\rm fin}$.
	In particular, he showed that~$\Pi^{(k)}_{\infty} \neq \Pi^{(k)}_{\rm fin}$, the set $\Pi^{(k)}_{\infty}$ is uncountable, and $\Pi^{(k)}_{\infty} = \overline{\Pi^{(k)}_{\rm fin}}$, where the final identity relies on an earlier result of Brown and Simonovits \cite{BS:84}.
	However, a complete characterization of the sets $\Pi^{(k)}_{\infty}$ and $\Pi^{(k)}_{\rm fin}$ remains an outstanding open problem. In addition,
	Mubayi and R\"odl~\cite{MubayiRodl2002} conjectured that there exist \emph{non-principal} families $\mathcal F$ of $k$-graphs for $k\ge 3$, that is, families $\mathcal F$ such that
	\[
	\pi(\mathcal F)<\min\{\pi(F):F\in\mathcal F\},
	\]
	and remarked that this should hold even when $|\mathcal F|=2$. 
	Balogh~\cite{Balogh2002} confirmed this conjecture by constructing a finite non-principal family of $3$-graphs whose size is much larger than two. 
	Subsequently, Mubayi and Pikhurko \cite{MubayiPikhurko2008} improved this result by constructing size-two non-principal $k$-graph families for all integers $k\ge 3$.
	These examples illustrate the difference between graphs and hypergraphs, motivating extensive research on hypergraph Tur\'an-type problems~\cite{BaloghClemenLidicky2022,ErdosSos1982,Reiher2020,MubayiZhao2007,LoMarkstrom2014,HalfpapLemonsPalmer2025}.
	
	\subsection{\texorpdfstring{$\ell$}{ell}-degree Tur\'an density}
A natural refinement of the classical Tur\'an density  replaces the global edge-density condition with a local degree condition. To formulate this, we recall the standard notions of links and degrees in $k$-graphs.
	Let $H$ be a $k$-graph and let $0< \ell<k$.  For $T \in\binom{V(H)}{\ell}$, its \emph{link} is
	\[
	N_H(T)=\left\{T'\in\binom{V(H)\setminus T}{k-\ell}: T\cup T'\in E(H)\right\},
	\]
	and its \emph{degree} is $d_H(T)=|N_H(T)|$. The \emph{minimum $\ell$-degree} of $H$ is $\delta_\ell(H)=\min\{d_H(T):T\in\binom{V(H)}\ell\}$.
In the special case $\ell=k-1$, $\delta_{k-1}(H)$ is referred to as the \emph{minimum codegree} of $H$.
	For a family $\calF$ of $k$-graphs, the \emph{$\ell$-degree Tur\'an number} $\ex^{(k)}_\ell(n,\calF)$ is the maximum $\delta_\ell(H)$ that an
	$n$-vertex $\calF$-free $k$-graph $H$ can admit, 
	and the \emph{$\ell$-degree Tur\'an density} is defined as
	\begin{equation*}
\gamma^{(k)}_\ell(\calF)=\lim_{n\to\infty}\frac{\ex^{(k)}_\ell(n,\calF)}{\binom{n-\ell}{k-\ell}}.
	\end{equation*}
The limit exists by a result of Lo and Markstr\"om~\cite{LoMarkstrom2014}.
For each family $\calF$ of $k$-graphs, we have
\begin{equation*}
\gamma^{(k)}_{k-1}(\calF)\le \gamma^{(k)}_{k-2}(\calF)\le \dots \le \gamma^{(k)}_{1}(\calF)=\pi(\calF).
\end{equation*}
Thus, the $\ell$-degree Tur\'an density generalizes both the codegree density and the classical Tur\'an density. The codegree setting was introduced by Mubayi and Zhao~\cite{MubayiZhao2007}, while the general $\ell$-degree version was introduced by Lo and Markstr\"om~\cite{LoMarkstrom2014}.
	
	Similarly to the classical Tur\'an density, let $\Gamma^{k,\ell}_{\infty}$, $\Gamma^{k,\ell}_{{\rm fin}}$ and $\Gamma^{k,\ell}$ denote the sets of all possible $\ell$-degree Tur\'an densities of arbitrary families, finite families, and single $k$-graphs, respectively. Then for all $0<\ell<k$, 
	\begin{equation*}
		\Gamma^{k,\ell} \subseteq \Gamma^{k,\ell}_{{\rm fin}}\subseteq \Gamma^{k,\ell}_{\infty}\subseteq[0,1).
	\end{equation*}
For $k=2$ this again gives the set in~\eqref{eq:classic-turan-density}.  By contrast, Mubayi and Zhao~\cite{MubayiZhao2007} proved that $\Gamma^{k,k-1}_\infty$ is dense in $[0,1)$ for every $k\ge3$.
Subsequently, Lo and Markstr\"om~\cite{LoMarkstrom2014} proved the analogous density theorem for all $k>\ell>1$, and they further posed the open question of whether $\Gamma^{k, \ell}_{\infty}=[0,1)$.

	Since there are only countably many finite forbidden families, a finite-family realization theorem can hold only for countably many values. The natural first target is therefore the rational values. 
	Recently, Gao, Pikhurko, Rong and Sun~\cite{GaoPikhurkoRongSun2026} proved that every rational number in $[0,1)$ is the codegree Tur\'an density of a finite family of $k$-graphs.  
	Our first theorem extends this rational-value phenomenon from codegree to all minimum $\ell$-degrees above the half-uniformity threshold.
	
	\begin{theorem}
		\label{thm:rational-main}
		Let $k\ge 3$ and let $\ell$ satisfy ${k}/{2}<\ell<k$.
Then every rational number $\alpha\in[0,1)$ belongs to $\Gamma^{k,\ell}_{\rm fin}$.
	\end{theorem}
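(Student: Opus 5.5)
We will follow the strategy of Gao, Pikhurko, Rong and Sun~\cite{GaoPikhurkoRongSun2026} for codegree and adapt it to $\ell$-degrees. The point is that when $\ell>k/2$, any two $\ell$-sets inside a $k$-set intersect. This is what makes a ``pattern'' construction behave for $\ell$-degrees the way it does for codegrees.

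Write $\alpha=a/b$. The aim is a finite family $\calF$ whose extremal construction is a blow-up of a suitable finite pattern $P$ on $b$ (or a multiple of $b$) parts. Concretely, $P$ is a $k$-uniform multi-pattern: a collection of multisets of parts, possibly with recursion. In its balanced blow-up $H_n$, every $\ell$-set $T$ should have normalised degree tending to at least $\alpha$. A natural first choice is $\ZZ_b$-colouring: put each vertex into one of $b$ equal classes $V_0,\dots,V_{b-1}$, and let a $k$-set be an edge iff the sum of the class indices of its vertices lies in a prescribed residue set $R\subseteq\ZZ_b$ with $|R|=a$. For any fixed $\ell$-set $T$, the remaining $k-\ell$ vertices have index-sum asymptotically uniform over $\ZZ_b$. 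Hence $d(T)/\binom{n-\ell}{k-\ell}\to a/b$, which gives the lower bound $\gamma_\ell^{(k)}(\calF)\ge\alpha$ provided $H_n$ is $\calF$-free.

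So take $\calF$ to be the finite family of all $k$-graphs on at most $m$ vertices, for a suitable constant $m=m(k,b)$, that are not homomorphic to the pattern. Equivalently, $\calF$ consists of the graphs that admit no labelling $V\to\ZZ_b$ with every edge sum in $R$. Then $H_n$ is automatically $\calF$-free.

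For the upper bound we show that any $\calF$-free $H$ with $\delta_\ell(H)\ge(\alpha+\eps)\binom{n-\ell}{k-\ell}$ contains a member of $\calF$. The plan has three steps.

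1. Apply hypergraph regularity, or simply a supersaturation/averaging argument, to pass to the statement that every $m$-vertex subgraph of $H$ admits such a labelling. By compactness, the labellings of small subgraphs can be taken to cohere.
2. Fix a small subset $S$ and a labelling $\phi$ of it. Each further vertex $v$ is then forced to have a label $\phi(v)$ compatible with $S$.
3. Count edges through a fixed $\ell$-set $T$. Here the intersection property $\ell>k/2$ enters: it gives rigidity, since once a labelling is determined on $T$, the labels of all $(k-\ell)$-extensions are constrained consistently. A randomly chosen $\ell$-set $T$ then sees edges only among those $(k-\ell)$-sets whose label-sum lands in $R-\sum\phi(T)$. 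The density of such sets is at most about $a/b$ after averaging over a balanced choice, and picking $T$ with label multiset making the classes as uneven as possible gives $\delta_\ell\le(\alpha+o(1))\binom{n-\ell}{k-\ell}$, a contradiction.

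One must also make sure the class sizes are forced to be near-balanced. If they are not, pick $T$ inside the largest classes to reduce its degree below $\alpha$. The residue set $R$ should be chosen, for example as an interval $\{0,\dots,a-1\}$ with $b$ replaced by a large multiple, so that imbalance always hurts.

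The main obstacle is the last step: ensuring that any labelling, not only balanced ones, yields some $\ell$-set of degree at most $(\alpha+o(1))\binom{n-\ell}{k-\ell}$. Equivalently, we must show the optimisation over part-weights of the minimum $\ell$-degree of the pattern is exactly $a/b$. If the plain $\ZZ_b$ construction fails this (for non-uniform weights the minimum could exceed $a/b$), the first fix to try is a recursive or iterated pattern of the Pikhurko type, with enough forbidden configurations in $\calF$ to kill unbalanced weightings. The condition $\ell>k/2$ should be exactly what allows the $\ell$-degree constraint to be reduced to a linear inequality on the weights.
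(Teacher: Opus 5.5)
There is a genuine gap, and it is in the construction itself, not just the write-up. Your labellings constrain only \emph{edges}. So if $K^{(k)}_n$ admits a labelling with every $k$-sum in $R$, then every $k$-graph does, and your family $\calF$ is empty. The image of $c\mapsto kc$ on $\ZZ_Q$ is the set of multiples of $\gcd(k,Q)\le k$. Hence a constant labelling with $kc\in R$ exists in two cases: whenever $\gcd(k,b)=1$ (for every nonempty $R\subseteq\ZZ_b$), and for your suggested interval $R=\{0,\ldots,aM-1\}\subseteq\ZZ_{bM}$, which contains $0$.

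Already $k=3$, $\ell=2$, $\alpha=1/2$ fails. In $\ZZ_2$, $R=\{0\}$ is killed by the all-$0$ labelling, and $R=\{1\}$ by the all-$1$ labelling since $3\equiv 1$. Non-constant labellings can also beat $a/b$. Take $k=4$, $\ell=3$, $\alpha=1/4$, $R=\{2\}\subseteq\ZZ_4$, and label the vertices $1$ and $3$ in equal proportion. Every $3$-set then has odd label sum, and exactly one of the two labels completes it to sum $2$, so the minimum codegree density is $1/2$.

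So the ``main obstacle'' you flag is not a loose end. The max--min optimisation over weightings of such a pattern does not in general equal $a/b$. Step~3 (``rigidity'', choosing $T$ as uneven as possible) is asserted without argument, and the hypothesis $\ell>k/2$ is never actually used. Step~1 is also unsound. Knowing that all $m$-vertex subgraphs are labellable does not let you glue the labellings. The removal/testing substitute only yields a labelling after deleting $\eps'(m)n^k$ edges for a fixed $\eps'(m)$. That destroys the minimum-degree hypothesis and cannot serve every $\eps>0$ with one finite family.

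The paper puts the difficulty in the opposite place: its family makes the upper bound automatic. For every $m$-vertex base $F$ with $\delta_\ell(F)\ge(\alpha-\eta)\binom{m-\ell}{r}$, and every $b$-family $B$ of $\ell$-subsets of $V(F)$, it attaches a fresh $r$-set adjacent to $a+1$ members of $B$. A sampling lemma supplies the base; only degree $\alpha-\eta$ is needed there, so $m$ does not depend on $\eps$. Averaging over the $b$ members of $B$ then gives $\Omega(n^r)$ $r$-sets adjacent to at least $a+1$ of them.

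The real work is the lower construction. Vertices are coloured by $\ZZ_Q$, $Q=bM$, with non-uniform weights chosen so that the minimum colour of $r$ random vertices is uniform. Since $\ell>k/2$, a $k$-set has at most one colour $c$ of multiplicity at least $\ell$. Such a set is an edge iff the minimum colour of its remaining $r$ vertices avoids the cyclic interval $I_c$ of length $(1-\alpha)Q$. The cyclic interval lemma, in robust form, states that a distribution $\nu$ on $X$ with $\nu(I_c)\le 1-\alpha$ for all $c\in X$ exists iff $X$ contains a full residue class mod $M$. It follows that the colours occurring at least $\ell$ times in any high-degree base contain such a class, giving $b$ monochromatic $\ell$-sets. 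The minimum colour of any $r$-set lies in $I_c$ for exactly $b-a$ of these colours, so no $r$-set is adjacent to $a+1$ of them. This pairing of a forcing family with a cyclic obstruction is what your proposal is missing.
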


It is natural to ask whether every rational value in $[0,1)$ can be
realized as the $\ell$-degree Tur\'an density of a single forbidden
$k$-graph.  In the codegree case $\ell=k-1$, Keevash and
Zhao~\cite[Theorem~1.3]{KeevashZhao2007} proved that, for every
integer $q\ge2$, there exists a $k$-graph whose codegree Tur\'an density
is $1-1/q$.  Gao, Pikhurko, Rong and Sun~\cite[Theorem~3.2]
{GaoPikhurkoRongSun2026} recently gave a new proof of this result.
Our next theorem extends this sequence of single-forbidden density
values from the codegree setting to all pairs $k>\ell\ge1$, via an
explicit recursive construction.

\begin{theorem}
\label{thm:q-color-main}
Let $k>\ell\ge 1$ and let $q\ge 1$ be an integer.  Then there exists a $k$-graph $F^{(k)}_{\ell, q}$ such that
		\[
		\gamma^{(k)}_\ell(F^{(k)}_{\ell, q})=1-q^{\ell-k}.
		\]
\end{theorem}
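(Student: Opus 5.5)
We will realize the value $1-q^{\ell-k}$ by matching a $q$-colouring lower-bound construction with a forbidden $k$-graph $F=F^{(k)}_{\ell,q}$ that cannot be embedded in it. Throughout we write $r=k-\ell$.

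\textbf{Lower-bound construction.} Colour every $(k-1)$-subset of $[n]$ uniformly at random with $q$ colours; in fact any colouring will do. Declare a $k$-set $e$ to be an edge unless all $r$ of the specific sub-$(k-1)$-sets of $e$ determined by some ordering rule receive a prescribed forbidden pattern. The more convenient version is as follows. Fix a random map $\phi:\binom{[n]}{k-1}\to\ZZ_q$, and let $H_\phi$ consist of those $k$-sets $e=\{v_1<\dots<v_k\}$ with $\sum_{i}\phi(e\setminus v_i)\not\equiv 0$. This has codegree density $1-1/q$. To obtain exponent $r$, we instead let $\phi$ act on the $\ell$-sets and $r$ further independent "layers". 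Concretely, we will use a recursive (on $r$) construction. $H^{(k)}_{\ell,q}$ is obtained from $H^{(k)}_{\ell+1,q}$ by composing with one more random $q$-colouring, so that for any fixed $\ell$-set $T$ and a uniformly random $(k-\ell)$-extension, the $r$ independent colour-conditions each fail with probability $1/q$. Hence the non-edge probability is $q^{-r}$, and concentration (Chernoff plus a union bound over $\binom{n}{\ell}$ sets) gives $\delta_\ell\ge (1-q^{-r}-o(1))\binom{n-\ell}{r}$.

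\textbf{Forbidden graph.} We define $F$ recursively in the same fashion. For $r=1$ we take the Keevash--Zhao type $k$-graph whose every $q$-colouring of $(k-1)$-sets forces a violation; this is a sufficiently large Ramsey-type configuration. We then build $F^{(k)}_{\ell,q}$ from $F^{(k)}_{\ell+1,q}$ by blowing up along a new $(\ell)$-set layer, so that any embedding into $H_\phi$ would, after restriction, yield a copy of the previous level inside the corresponding $H$ of the previous level, with compatible colours. This gives $\gamma_\ell(F)\ge 1-q^{-r}$.

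\textbf{Upper bound and main obstacle.} We must show that every $n$-vertex $k$-graph $H$ with $\delta_\ell(H)\ge(1-q^{-r}+\eps)\binom{n-\ell}{r}$ contains $F$. We proceed by induction on $r$, using the codegree case ($r=1$, Keevash--Zhao / Gao--Pikhurko--Rong--Sun) as the base. In the inductive step we pick an $\ell$-set $T$; its link is a $r$-graph of density $>1-q^{-r}$. Averaging over vertices $v$, we find many $v$ such that $\{T\cup v\}$-links have density $>1-q^{-(r-1)}$, and a supersaturation/hypergraph-removal argument yields an $(\ell+1)$-degree-dense subgraph on which the induction hypothesis applies, repeatedly, to build the layers of $F$. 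The hard step is exactly this passage from a minimum $\ell$-degree condition to a minimum $(\ell+1)$-degree condition on a large substructure, because minimum degree is not inherited by averaging. Our first attempt will be to use a random $(\ell+1)$-set restriction together with the Lo--Markstr\"om supersaturation (which makes $\gamma$ robust under deleting $o(n^k)$ edges), so as to clean $H$ into a subgraph in which all but few $(\ell+1)$-sets have large degree. We will then embed $F$ greedily, using the recursive blow-up structure of $F$ to tolerate the few bad sets.
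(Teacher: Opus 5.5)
\textbf{There is a genuine gap on both sides of the argument.}

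\emph{Lower bound.} Neither the host $k$-graph nor $F$ is actually defined. The one concrete construction, $H_\phi$, has the wrong value. Each $k$-set is a non-edge with probability exactly $1/q$, whatever $\ell$ is, so its normalized minimum $\ell$-degree is $1-1/q+o(1)$. For $r\ge2$ this is not $1-q^{-r}$. The ``$r$ independent layers'' and the ``blow-up along a new $\ell$-set layer'' meant to repair this are never specified. So neither the degree bound nor $F$-freeness can be checked. A quasirandom host would also make the upper bound much harder, since $F$ would have to avoid it.

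The missing idea is that vertex colourings suffice. Suppose $F$ admits no partition of its vertices into $q$ classes without a monochromatic edge. Then the balanced complete $q$-partite $k$-graph (all $k$-sets not inside one class) is $F$-free and has normalized minimum $\ell$-degree $1-q^{-r}-o(1)$. The paper's $F^{(k)}_{\ell,q}$ is of this type. Start from $K^{(\ell)}_{q(\ell-1)+1}$. Build $F^{(j)}_{\ell,q}$ from $F^{(j-1)}_{\ell,q}$ by attaching to each $(j-1)$-edge $e$ a fresh copy of $F^{(j)}_{\ell,q-1}$, every vertex of which extends $e$ to an edge. Non-$q$-colourability then follows by induction on $j$ and $q$.

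\emph{Upper bound.} You decrease $r$ by raising $\ell$ inside the same $k$-graph, and the step you flag as hard does fail as stated. For an $\ell$-set $T$, the mean normalized degree of $T\cup\{v\}$ over $v$ is just that of $T$, about $1-q^{-r}+\eps$. This is below the threshold $1-q^{-(r-1)}$ you need. Removal or supersaturation controls only all but $o(n^{\ell+1})$ of the $(\ell+1)$-sets. The induction hypothesis, however, requires a minimum-degree condition on every $(\ell+1)$-set of the host. ``Tolerating the few bad sets'' is precisely what is left unproved.

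The paper instead keeps $\ell$ fixed and lowers the uniformity.
\begin{enumerate}[label=(\arabic*)]
\item Let $G$ be the $(k-1)$-graph of those $Y$ such that $H[N_H(Y)\setminus W]$ contains $F^{(k)}_{\ell,q-1}$ for every $W$ with $|W|\le|V(F^{(k)}_{\ell,q})|$.
\item By induction on $q$, every non-edge $Y$ of $G$ has $d_H(Y)\le((q-1)/q+\theta)n$. This is because a larger neighbourhood inherits relative minimum $\ell$-degree above $1-(q-1)^{-r}$.
\item Write $x$ for the normalized $d_G(T)$. The identity $\sum_{Z} d_H(T\cup Z)=r\,d_H(T)$ over $Z\in\binom{V(H)\setminus T}{r-1}$ gives roughly $x+(1-x)(1-1/q)\ge 1-q^{-r}+\eps$, i.e.\ $x> 1-q^{-(r-1)}$. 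Hence $\delta_\ell(G)\ge(1-q^{-(r-1)}+\eta)\binom{n-\ell}{r-1}$.
\item Induction on $r$ yields $F^{(k-1)}_{\ell,q}\subseteq G$, which lifts greedily to $F^{(k)}_{\ell,q}\subseteq H$.
\end{enumerate}
This is the mechanism that produces the exponent $r$, and it is absent from your proposal.
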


For $\ell=1$, this gives the classical Tur\'an density $\pi(F^{(k)}_{1,q})=1-q^{1-k}$.
In particular, $F^{(2)}_{1,q}=K_{q+1}$ is the complete graph on $q+1$ vertices. In fact, $F^{(k)}_{\ell,q}$ denotes an explicitly constructed $k$-graph
defined recursively (see \cref{def:Fq}). 
Independently, Ai, Ding, Liu
and Yang~\cite{AiDingLiuYang2026} obtained the same sequence of
single-forbidden density values using tree suspensions and transfer
functions. Maybe somewhat surprisingly, their construction and proof are different from ours.

As an application of the family $F^{(k)}_{\ell,q}$, we establish a strengthened non-principality property for $\ell$-degree Tur\'an densities. 
Recall that for classical Tur\'an densities, a family $\mathcal{F}$ is said to be \emph{non-principal} if $\pi(\mathcal{F})<\min_{F\in\mathcal{F}}\pi(F)$. As mentioned above, no non-principal family exists for graphs, whereas such families have been constructed for all $k\ge 3$. 
In the codegree setting, Mubayi and Zhao~\cite{MubayiZhao2007} constructed finite non-principal families and posed the problem of constructing a non-principal family of size two. This problem was recently resolved by Gao, Pikhurko, Rong and Sun~\cite{GaoPikhurkoRongSun2026} for the codegree Tur\'an density. For all integers $k>\ell>1$, we provide an explicit non-principal family $\mathcal{F}$ containing exactly two  $k$-graphs, one of which is the two-color forcing hypergraph $F^{(k)}_{\ell,2}$.
	
\begin{theorem}
\label{thm:nonprincipal-main}
For all integers $k>\ell>1$, there exist two $k$-graphs $F_1$ and $F_2$ such that
\[
0< \gamma^{(k)}_\ell(\{F_1,F_2\})
<
\min\{\gamma^{(k)}_\ell(F_1),\gamma^{(k)}_\ell(F_2)\}.
\]
\end{theorem}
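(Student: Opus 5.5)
We take $F_1=F^{(k)}_{\ell,2}$ from \cref{thm:q-color-main}, so $\gamma^{(k)}_\ell(F_1)=1-2^{\ell-k}$. We then choose $F_2$ so that $\gamma^{(k)}_\ell(F_2)\ge 1-2^{\ell-k}$ but $F_2$ is contained in every near-extremal construction for $F_1$. The natural candidate for $F_2$ is a $k$-graph that lives inside the lower-bound construction for $F_1$. That construction is presumably a "2-colour" construction: colour the $(k-\ell)$-subsets (or use a random 2-colouring/vertex partition), and a $k$-set $T\cup T'$ is an edge unless it receives a prescribed forbidden colour pattern. This yields minimum $\ell$-degree $(1-2^{\ell-k}+o(1))\binom{n-\ell}{k-\ell}$.

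We take $F_2$ to be a small $k$-graph that appears in this construction but is very sparse in a different sense. For instance, $F_2$ could be a $k$-graph with $\gamma^{(k)}_\ell(F_2)>1-2^{\ell-k}$ but with $\gamma^{(k)}_\ell(F_2)<1$. Concretely, $F_2=F^{(k)}_{\ell,3}$ has density $1-3^{\ell-k}$. This candidate is only useful if $F^{(k)}_{\ell,3}$ is not already forced to contain $F_1$. A cleaner choice is $F_2=K^{(k)}_{t}$ minus an edge, or a small "tight" configuration that every 2-colour construction must contain, while a 3-colour, $q=3$-type construction avoids it.

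The plan has three steps.

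\textbf{Step 1 (upper bounds on the minimum).} We show $\gamma^{(k)}_\ell(F_2)\ge 1-2^{\ell-k}$, or even strictly larger. We do this by exhibiting a construction avoiding $F_2$ with higher $\ell$-degree. The $q=3$ construction gives $1-3^{\ell-k}>1-2^{\ell-k}$, provided $F_2$ is chosen not to embed in it. We therefore choose $F_2$ as a configuration that needs two colour classes in a rigid pattern, so that $F_2$ is absent from the 3-colour construction.

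\textbf{Step 2 (upper bound for the pair).} We prove $\gamma^{(k)}_\ell(\{F_1,F_2\})<1-2^{\ell-k}$. The argument is a stability statement for $F_1$. Any $F_1$-free $k$-graph with $\delta_\ell\ge(1-2^{\ell-k}-\eps)\binom{n-\ell}{k-\ell}$ must look like the 2-colour extremal structure. Since that structure contains $F_2$ (by the choice of $F_2$), we obtain a density gap. Here we would reuse the mechanism behind the upper bound in \cref{thm:q-color-main}. The recursive definition of $F_{\ell,2}$ presumably forces, from high $\ell$-degree, a 2-colouring of links with rigid structure. We would extract from that proof a robust version giving the colouring under a slightly smaller degree, and then plant $F_2$ in it.

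\textbf{Step 3 (positivity).} We show $\gamma^{(k)}_\ell(\{F_1,F_2\})>0$ by exhibiting a construction with positive minimum $\ell$-degree avoiding both graphs. A random or algebraic (e.g.\ $\ZZ_m$-sum-based) construction works, using $\ell>1$. The condition $\ell>1$ is exactly what allows densities strictly between $0$ and the gap, since for $\ell=1$ the behaviour is governed by classical Tur\'an theory.

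\textbf{Main obstacle.} Step 2 is the main obstacle: obtaining a stability/robust version of the forcing argument for $F_{\ell,2}$ with a quantitative gap. If stability is unavailable, we would instead design $F_2$ as the "next level" of the recursion defining $F_{\ell,2}$. The aim is that, jointly with $F_1$, a degree just below $1-2^{\ell-k}$ already forces a contradiction by a direct counting argument on links.
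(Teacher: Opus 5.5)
\textbf{Gap: the second forbidden graph is never identified, and Step~1 cannot work as described.} Your skeleton matches the paper's: take $F^{(k)}_{\ell,2}$ as one member, prove stability for it, and force the other member inside the near-extremal structure. What is missing is the choice of the other member and a construction certifying its density.

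The balanced bipartite construction of \cref{obs:coloring-lower} is $F^{(k)}_{\ell,2}$-free and has normalized minimum $\ell$-degree $1-2^{\ell-k}-o(1)$. So any pair with joint density below $1-2^{\ell-k}$ forces $F_2$ to embed in it, i.e.\ $F_2$ must be $2$-colorable. But a $2$-colorable $F_2$ is $3$-colorable and embeds in the $3$-partite (indeed any $q$-partite) construction. So ``present in the 2-colour construction but absent from the 3-colour one'' is impossible, and no $q$-partite construction can certify $\gamma^{(k)}_\ell(F_2)\ge 1-2^{\ell-k}$.

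Your concrete candidates fail for the opposite reason. $F^{(k)}_{\ell,3}$ and $K^{(k)}_t$ minus an edge (for $t\ge 2k$) are not $2$-colorable. Hence the bipartite construction avoids both members of the pair, and $\gamma^{(k)}_\ell(\{F_1,F_2\})=1-2^{\ell-k}=\gamma^{(k)}_\ell(F_1)$. The same objection rules out your fallback of taking $F_2$ from a higher level of the recursion.

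\textbf{What is needed, and how the paper supplies it.} You need a $2$-colorable $F_2$ with $\gamma^{(k)}_\ell(F_2)\ge 1-2^{\ell-k}$, witnessed by a construction of a completely different kind. The paper takes $F_2=K^k(t,t)$ and uses the Mubayi--Zhao tournament construction:
\begin{itemize}
\item A random tournament yields a $K^{(3)}_4$-free $3$-graph $G_3$ in which every pair has about $n/2$ neighbours.
\item The $k$-graph of all $k$-sets containing an edge of $G_3$ has $\delta_\ell\ge(1-2^{\ell-k}-o(1))\binom{n-\ell}{k-\ell}$. This is where $\ell\ge2$ enters: $T$ must contain a pair.
\item $3$-uniform Ramsey with $t=2R_3(4,k-1)$ shows this $k$-graph is $K^k(t,t)$-free.
\end{itemize}

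\textbf{Step 2 also depends on this choice.} Stability only yields disjoint independent sets $A,B$ of size $(1/2-o(1))n$. The degree condition on $\ell$-subsets of $A$ and of $B$ forces almost all $k$-sets with at least $\ell$ vertices on one side to be present. It does not force crossing $k$-sets with fewer than $\ell$ vertices on each side: for $k=4$, $\ell=3$ one may delete all $k$-sets with two vertices on each side. $K^k(t,t)$ is tailored to this, since every edge has $k-1\ge\ell$ vertices on one side. The paper bounds the missing $(k-1,1)$-sets and picks random $t$-subsets of $A$ and $B$.

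\textbf{Step 3 is unsupported as stated.} A random $k$-graph of positive density contains every fixed $k$-graph, so it cannot avoid both members. The paper instead keeps only those edges of the tournament construction that meet both halves of a balanced partition. This subgraph is $2$-colorable, hence $F^{(k)}_{\ell,2}$-free, and it is $K^k(t,t)$-free as a subgraph of the tournament construction.
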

	
\subsection*{Proof overview and organization}
The paper has two complementary parts. In \cref{sec:rational}, we prove \cref{thm:rational-main}.  Write
$\alpha=a/b$ and $r=k-\ell$.  For the upper bound, a sampling argument
produces a bounded-size induced subgraph with almost the same normalized
minimum $\ell$-degree.  We then force one of finitely many extensions of
this base $k$-graph by adding pairwise disjoint new $r$-sets to prescribed
$b$-element families of $\ell$-sets.  For the matching lower bound, we
construct a vertex-colored $k$-graph with color set $\mathbb Z_Q$.  The
color-class proportions are chosen so that the minimum of $r$ independent
samples from the color distribution is uniform on $\mathbb Z_Q$.  A
cyclic interval lemma then shows that the construction contains none of
the forbidden extensions.  The nonuniform color distribution and the cyclic obstruction are the main additional ingredients needed to adapt the codegree argument to the range $\ell>k/2$.

The second part begins in \cref{sec:qcolor}, where we construct the recursive color-forcing $k$-graphs $F^{(k)}_{\ell,q}$ and prove \cref{thm:q-color-main}. Non-$q$-colorability and the balanced $q$-partite construction give the lower bound, while the upper bound follows from an induction on $q+k-\ell$ using an auxiliary $(k-1)$-graph and a lifting argument.
In \cref{sec:stability}, we consider the case $q=2$ and show that every
near-extremal $F^{(k)}_{\ell,2}$-free $k$-graph contains two disjoint
independent sets of size $(1/2-o(1))n$.
Finally, in \cref{sec:nonprincipal}, a random-tournament construction gives the required lower bound for $K^k(t,t)$ and a positive lower bound for the joint density.  For the strict upper bound, the stability lemma, a count of missing crossing edges, and a final random choice force a copy of $K^k(t,t)$, yielding the strict two-element non-principality asserted in \cref{thm:nonprincipal-main}.  We conclude with open problems in \cref{sec:concluding-remarks}.

Throughout the paper, all hypergraphs are finite and simple.  For a positive integer $m$, we write $[m]=\{1,\ldots,m\}$.  We use the standard hierarchy notation $0<a\ll b\ll c$ to mean that the constants are chosen from right to left, each sufficiently small in terms of the constants to its right and the fixed integer parameters.

\section{Rational values above the half-uniformity threshold}\label{sec:rational}
	
	In this section, we prove \cref{thm:rational-main}. 
	The case $\alpha=0$ follows trivially by forbidding a single $k$-edge. Therefore, throughout this section we fix a rational number $\alpha\in (0,1)$ and write  $\alpha=a/b$
	with $0<a<b$ and $\gcd(a,b)=1$.
	
	Let
	\[
	r:=k-\ell,
	\qquad
	\beta:=1-\alpha=\frac{b-a}{b}.
	\]
	The condition $\ell>k/2$ is equivalent to $r<\ell$.  This inequality
	will only be used in the lower-bound construction, where it guarantees
	that a $k$-set has at most one color appearing at least $\ell$
	times.
	
	We shall construct a finite family $\calF_\alpha$ of $k$-graphs by
	extending bounded $m$-vertex $k$-graphs such that $\gamma^{(k)}_\ell(\calF_\alpha)=\alpha$.  The proof has two parts.  
	The upper bound is forced
	by a finite extension family: any $k$-graph whose minimum
	$\ell$-degree is slightly larger than $\alpha$ contains at least one of these
	extensions. 
	For the lower bound, for all sufficiently large integers $n$, we construct a
	vertex-colored $k$-graph $G_{\alpha,n}$ on $n$ vertices.  The vertex set
	is colored according to a non-uniform distribution over $\mathbb{Z}_Q$, and
	whether a $k$-set is an edge is decided by the colors of its vertices
	through the cyclic intervals $I_c$.  This construction has normalized
	minimum $\ell$-degree asymptotic to $\alpha$, while all
	forbidden extensions are excluded by a cyclic interval obstruction.
	
	\subsection{The cyclic obstruction}
	\label{subsec:cyclic}
	
	We first introduce some elementary cyclic facts that will be used in
	the lower-bound construction.
	
	Choose a positive integer $M$ and define $Q:=bM$ and $h:=(b-a)M=\beta Q$.
	We identify $\ZZ_Q$ with $\{0,1,\ldots,Q-1\}$ with addition modulo $Q$. For $x\in\ZZ_Q$, let
	\[
	I_x:=\{x, x+1,\ldots, x+h-1\}\subseteq\ZZ_Q
	\]
	be the cyclic interval of length $h$.  
	Thus, every complete residue class modulo $M$ has size $b$, i.e., $| \{x\in \mathbb{Z}_Q: x \equiv i \pmod M\}|=b$ and every
	interval $I_c$ has relative size $h/Q=\beta$.
	
	We first prove a cyclic
	interval lemma.  It will be used
	later to force a complete residue class among the colors appearing many
	times in the base $k$-graph.  The final assertion gives the counting property of
	such a residue class that will be used to rule out the forbidden
	extensions.
	\begin{lemma}
		\label{lem:cyclic}
		Let $X\subseteq\ZZ_Q$ be nonempty.  Then the following two properties
		are equivalent.
		\begin{enumerate}[label=\textup{(\roman*)}]
			\item There exists a probability distribution $\nu$ supported on $X$
			such that $\nu(I_c)\le \beta$ for every $c\in X$.
			\item There exists  $i\in  \mathbb{Z}_M$ such that $\{x\in \mathbb{Z}_Q: x \equiv i \pmod M\}\subseteq X$.
		\end{enumerate}
Moreover, if $R$ is a complete residue class modulo $M$, then $|I_x\cap R|=b-a$ for every $x\in\ZZ_Q$.
	\end{lemma}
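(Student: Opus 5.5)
The ``moreover'' part and (ii)$\Rightarrow$(i) are direct. I would do them first and then prove (i)$\Rightarrow$(ii) with a greedy cyclic-covering argument.

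\emph{The counting identity.} Write $R=\{i,i+M,\dots,i+(b-1)M\}$. The interval $I_x$ has length $h=(b-a)M$, so it is a disjoint union of $b-a$ blocks of $M$ cyclically consecutive residues. Each such block contains exactly one element of each residue class modulo $M$, which gives $|I_x\cap R|=b-a$. For (ii)$\Rightarrow$(i), let $\nu$ be the uniform distribution on the residue class $R\subseteq X$. By the counting identity, $\nu(I_c)=(b-a)/b=\beta$ for every $c$, and in particular for every $c\in X$.

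\emph{The greedy map.} For (i)$\Rightarrow$(ii), fix $\nu$ as in (i). For $c\in X$, define $f(c)$ to be the first element of $X$ encountered when moving cyclically forward from $c+h$, with $c+h$ itself allowed. Since $X$ is finite, iterating $f$ from any starting point eventually enters a cycle $c_0,c_1,\dots,c_{p-1}$ with $c_{j+1}=f(c_j)$ (indices mod $p$). Lift the steps to positive integers $s_j\ge h$, where $s_j$ is the forward distance from $c_j$ to $c_{j+1}$. Then $\sum_j s_j=wQ$ for some winding number $w\ge1$, so $ph\le wQ$, i.e.\ $p\beta\le w$.

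\emph{Covering.} Every point of $X$ lies in at least $w$ of the intervals $I_{c_0},\dots,I_{c_{p-1}}$, counted with multiplicity. Indeed, in each traversal of the circle, the arc from $c_j$ to $c_{j+1}$ splits into $I_{c_j}$ and the gap $[c_j+h,c_{j+1})$, and the gap contains no point of $X$ by the definition of $f$. Since $\nu$ is supported on $X$ and has total mass $1$, we get
\[
w\le\sum_{j=0}^{p-1}\nu(I_{c_j})\le p\beta\le w .
\]

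\emph{Extracting the residue class.} Hence all the inequalities are equalities. In particular $ph=wQ$, which together with $s_j\ge h$ and $\sum_j s_j=wQ$ forces $s_j=h$ for every $j$. So $c_{j+1}=c_j+h$, and $\{c_j\}$ is the orbit of $c_0$ under translation by $h$. The subgroup of $\ZZ_Q$ generated by $h=(b-a)M$ is generated by $\gcd((b-a)M,bM)=M\gcd(b-a,b)=M$, using $\gcd(a,b)=1$. Thus the orbit is the full residue class $c_0+M\ZZ_Q$, and it lies in $X$, which is (ii).

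The main obstacle is the covering step: one must check that each point of $X$ is covered once per lap, including the wrap-around bookkeeping when $p\beta<w$ a priori. This is also exactly where the gaps of $f$ are used. A fallback would be LP duality: exhibit a distribution $\mu$ on $X$ with $\mu(\{c:y\in I_c\})>\beta$ for all $y\in X$. However, the greedy cycle produces this directly, so I would try it first.
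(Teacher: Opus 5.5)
Your proof is correct. For (i)$\Rightarrow$(ii) it takes a genuinely different route from the paper; the counting identity and (ii)$\Rightarrow$(i) match the paper.

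\textbf{The paper's route.} The paper works with $\nu$ itself. A sliding argument, using $\nu(I_{j+1})=\nu(I_j)-\nu(j)+\nu(j+h)$ and moving forward to the next point of $\supp\nu$, shows that $\nu(I_i)\le\beta$ holds for \emph{all} $i\in\ZZ_Q$, not just for $i\in X$. Since the $Q$ values $\nu(I_i)$ average exactly $h/Q=\beta$, they are all equal to $\beta$. This forces $\nu(i+h)=\nu(i)$, so $\supp\nu$ is a union of cosets of $\langle h\rangle$, i.e.\ of residue classes modulo $M$.

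\textbf{Your route.} You work with $X$ alone. The cycle of your greedy map is a covering of $X$ by $p$ intervals $I_{c_j}$ with $c_j\in X$, in which each point of $X$ is covered $w$ times. Lifting to $\ZZ$ makes the wrap-around bookkeeping routine: the segments $[\tilde c_j,\tilde c_{j+1})$ tile $[\tilde c_0,\tilde c_0+wQ)$, and $h<Q$. In other words, the uniform distribution on $\{c_j\}$ is exactly the LP-dual certificate you mention as a fallback.

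\textbf{What each buys.} The paper's argument is shorter. It also gives a slightly stronger conclusion: the residue class lies in $\supp\nu$ for every witnessing $\nu$, not merely in $X$. Your argument has the advantage that the certificate does not depend on $\nu$. If $X$ contains no complete residue class, then not all $s_j$ equal $h$, so $wQ-ph\ge 1$. Hence every $\nu$ supported on $X$ satisfies $\max_{c\in X}\nu(I_c)\ge w/p\ge \beta+1/(pQ)\ge\beta+1/Q^2$. This proves \cref{lem:rho} with the explicit value $\rho=1/Q^2$ and avoids the compactness argument entirely.

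\textbf{Bookkeeping.} Define $s_j$ as the length actually travelled, namely $h$ plus the forward distance from $c_j+h$ to $c_{j+1}$, so that $s_j\in[h,Q]$. The reduced forward distance from $c_j$ to $c_{j+1}$ would be $0$ when $f(c_j)=c_j$, for instance when $|X|=1$.
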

	
	\begin{proof}
		Assume first that $\nu$ is supported on $X$ such that $\nu(I_c)\le \beta$ for every $c\in X$. 
		We first show that $\nu(I_i)\le \beta$ for every $i\in\ZZ_Q$.  If not, choose $j$ with $\nu(I_j)>\beta$.  Moving cyclically forward from $j$, let $j+t$ be the first point of $\supp\nu$. Then for each $0\le u<t$ we have 
		\[
		\nu(I_{j+u+1})=\nu(I_{j+u}) -\nu(j+u)+\nu(j+u+h)\ge \nu(I_{j+u}).
		\]
		Thus, $\nu(I_{j+t})>\beta$, contradicting the assumption since $j+t\in X$.
		Secondly, $\frac1Q\sum_{i\in\ZZ_Q} \nu(I_i) =h/Q=\beta$, since each point of $\ZZ_Q$ is counted
		in exactly $h$ of the intervals $I_i$. Thus,
		we must have $\nu(I_i) =\beta$ for all $i\in\ZZ_Q$, which implies that $0=\nu(I_{i+1})-\nu(I_i)=\nu(i+h)-\nu(i)$ for all $i\in\ZZ_Q$.
		Therefore, $\nu(i+h)=\nu(i)$ for every $i\in\ZZ_Q$.
		
		Note that $\{i+th \pmod Q:t\in\ZZ\} \subseteq \{x\in \mathbb{Z}_Q: x \equiv i \pmod M\}$, since $h=(b-a)M$. 
		Moreover, since $h/M=b-a$ is coprime to $Q/M=b$, repeated addition of
		$h$ runs through exactly one complete residue class modulo $M$. 
		Choose $i\in\supp\nu$.  Then $\nu$ is
		positive on the whole residue class of $i$, and since $\nu$ is
		supported on $X$, this complete residue class is contained in $X$.

		Conversely, suppose that $X$ contains a complete residue class
		$R=\{x\in \mathbb{Z}_Q: x \equiv i \pmod M\}$ for some $i\in  \mathbb{Z}_M$.  Let $\nu$ be the uniform distribution on $R$.  Every cyclic interval of length $h=(b-a)M$ contains exactly $b-a$
		points of $R$, while $|R|=b$.  Hence
		\[
		\nu(I_c)=\frac{b-a}{b}=\beta
		\]
		for every $c\in\ZZ_Q$, proving the equivalence. 
		Finally, every cyclic interval of length $h$ contains exactly $b-a$ points of $R$, since $h=(b-a)M$.
	\end{proof}
	
	We shall use the previous lemma in a robust form.  Since $\ZZ_Q$ is
	finite, if $X$ contains no complete residue class modulo $M$, then
	the inequality in \Cref{lem:cyclic} fails by a positive amount depending
	only on $Q$ and $h$.
	
	\begin{lemma}\label{lem:rho}
		For fixed $Q$ and $h$, there is a constant $\rho=\rho(Q,h)>0$ such that whenever a nonempty set $X\subseteq\ZZ_Q$ contains no complete residue class modulo $M$, every probability distribution $\nu$ supported on $X$ satisfies
		\[
		\max_{x\in X}\nu(I_x)\ge \beta+\rho.
		\]
	\end{lemma}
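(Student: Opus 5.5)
This is a compactness argument: a continuous function on a compact set attains its minimum, and finiteness of $\ZZ_Q$ leaves only finitely many sets $X$ to handle.

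Fix a nonempty $X\subseteq\ZZ_Q$ containing no complete residue class modulo $M$. Let $\Delta_X$ be the simplex of probability distributions supported on $X$. Viewed as vectors in $\mathbb{R}^{X}$ with nonnegative coordinates summing to $1$, $\Delta_X$ is compact and nonempty.

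Define $f_X(\nu):=\max_{x\in X}\nu(I_x)$. Each map $\nu\mapsto\nu(I_x)=\sum_{y\in I_x\cap X}\nu(y)$ is linear, hence continuous. A finite maximum of continuous functions is continuous, so $f_X$ attains its minimum $m_X$ at some $\nu^*\in\Delta_X$.

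By the implication (i)$\Rightarrow$(ii) of \Cref{lem:cyclic}, no distribution supported on $X$ satisfies $\nu(I_c)\le\beta$ for all $c\in X$. Applied to $\nu^*$, this gives $m_X=f_X(\nu^*)>\beta$.

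Now set
\[
\rho:=\min\{\,m_X-\beta : \emptyset\ne X\subseteq\ZZ_Q,\ X \text{ contains no complete residue class mod } M\,\}.
\]
This is a minimum of finitely many positive numbers, since $\ZZ_Q$ has at most $2^Q$ subsets, so $\rho>0$. If no such $X$ exists, the statement is vacuous and any $\rho>0$ works.

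The constant depends only on $Q$ and $h$, since $M=Q/b$ and $\beta=h/Q$ are determined by them together with the fixed $a$ and $b$. Finally, for any admissible $X$ and any $\nu$ supported on $X$,
\[
\max_{x\in X}\nu(I_x)\ge m_X\ge\beta+\rho,
\]
which is the claim.

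There is no real obstacle. The only point needing care is that the maximum in the statement ranges over $x\in X$ rather than all of $\ZZ_Q$. This matches exactly the negation of condition (i) in \Cref{lem:cyclic}, so that lemma applies directly.
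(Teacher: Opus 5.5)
Your proof is correct and is the same compactness argument the paper gives. For each admissible $X$, the continuous function $\nu\mapsto\max_{x\in X}\nu(I_x)$ attains its minimum on the compact simplex, this minimum exceeds $\beta$ by \Cref{lem:cyclic}, and $\rho$ is the minimum of the finitely many resulting gaps; your remarks on the vacuous case and on the dependence of $\rho$ on $Q$ and $h$ simply spell out details the paper leaves implicit.
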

	
	\begin{proof}
		There are only finitely many subsets of $\ZZ_Q$.  For a fixed nonempty $X\subseteq\ZZ_Q$, the simplex of probability distributions supported on $X$ is compact, and the function $\nu\mapsto \max_{x\in X}\nu(I_x)$ is continuous.  If $X$ contains no complete residue class modulo
		$M$, then \Cref{lem:cyclic} implies that this function is always
		strictly larger than $\beta$. Taking the minimum of these positive gaps over
		all nonempty $X\subseteq\ZZ_Q$ which contain no complete residue class
		modulo $M$ gives the desired constant $\rho>0$.
	\end{proof}

\subsection{The forbidden family}
\label{subsec:forbidden-family}
We now choose the parameters and define the finite forbidden family $\calF_\alpha$.  To construct this family, we also require an Inheritance Lemma which states that minimum $\ell$-degrees are approximately preserved under taking typical induced subgraphs of constant order. Such results have appeared in various forms in the literature and can be proved via a standard concentration analysis; see, for example,~\cite[Lemma 4.9]{Langliling23}.

	\begin{lemma}
		\label{lem:sampling}
		Let $k>\ell>0$ be integers.  For every $\eta>0$ there is $m_0\in \mathbb N$ such that the following holds for all $m\ge m_0$ and all $n\ge m$.  If $H$ is an $n$-vertex $k$-graph satisfying $\delta_\ell(H)\ge p\binom{n-\ell}{k-\ell}$
		for some $p\in[0,1]$, then there is an $m$-set $U\subseteq V(H)$ such that
		\[
		\delta_\ell(H[U])\ge (p-\eta)\binom{m-\ell}{k-\ell}.
		\]
	\end{lemma}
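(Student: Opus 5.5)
We will prove \cref{lem:sampling} by choosing $U$ uniformly at random among all $m$-subsets of $V(H)$ and showing that, with positive probability, every $\ell$-set $T\subseteq U$ has $d_{H[U]}(T)\ge (p-\eta)\binom{m-\ell}{k-\ell}$. Write $r=k-\ell$. It is convenient to expose $U$ in two stages. First choose the $\ell$-set $T$ (conditioning on $T\subseteq U$), and then choose the remaining $m-\ell$ vertices $W:=U\setminus T$ uniformly from $V(H)\setminus T$. Then $d_{H[U]}(T)$ is the number of $r$-subsets of $W$ lying in the link $N_H(T)$. The link $N_H(T)$ is an $r$-graph on $n-\ell$ vertices with density at least $p$. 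So it suffices to show that a uniformly random $(m-\ell)$-subset $W$ of an $(n-\ell)$-vertex $r$-graph $L$ with $e(L)\ge p\binom{n-\ell}{r}$ satisfies $e(L[W])\ge (p-\eta)\binom{m-\ell}{r}$, except with probability at most $e^{-c\eta^2 m}$ for some $c=c(k)>0$.

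For the concentration step, the expectation is exact: $\mathbb E\, e(L[W]) = e(L)\binom{m-\ell}{r}/\binom{n-\ell}{r}\ge p\binom{m-\ell}{r}$. For the deviation bound I would view $W$ as the image of a uniformly random injection and apply a martingale (vertex-exposure) inequality for sampling without replacement. Equivalently, one can use McDiarmid's inequality in the form valid for random permutations or uniform subsets. Swapping one vertex of $W$ for another changes $e(L[W])$ by at most $\binom{m-\ell-1}{r-1}$, which is $\frac{r}{m-\ell}\binom{m-\ell}{r}$. The bounded-differences inequality over $m-\ell$ exposures then gives
\[
\Pr\Bigl[e(L[W])<(p-\eta)\tbinom{m-\ell}{r}\Bigr]\le \exp\Bigl(-\frac{2\eta^2 (m-\ell)}{r^2}\Bigr).
\]
An alternative is Hoeffding's U-statistic bound, which gives the same order $e^{-\Omega(\eta^2 m)}$. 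The case $n$ close to $m$ needs no separate treatment, since the bound does not depend on $n$.

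Finally we take a union bound over the $\binom{m}{\ell}$ possible $\ell$-sets inside $U$. Let $B$ be the number of $\ell$-sets $T\subseteq U$ whose degree in $H[U]$ is too small. By symmetry and the two-stage description,
\[
\mathbb E\,B=\binom{m}{\ell}\cdot \Pr[\text{a fixed-position } T \text{ is bad}],
\]
where the probability is an average over $T$ of the conditional bound above. Hence $\mathbb E\, B\le \binom{m}{\ell}e^{-2\eta^2(m-\ell)/r^2}\le m^{\ell}e^{-2\eta^2(m-\ell)/k^2}$. This quantity is less than $1$ once $m\ge m_0(\eta,k)$. So some $U$ has $B=0$, which is exactly the conclusion of the lemma.

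The only genuinely delicate step is the concentration inequality for sampling without replacement. The union bound costs only a polynomial factor $m^\ell$, so any exponential tail suffices. If a clean bounded-differences statement for uniform subsets is not available to cite, I would instead pick $W$ by sampling $m-\ell$ vertices with replacement, which makes the coordinates independent. I would then control the $O(m^2/n)$ collisions separately, or, for $n<m^2$, condition on distinctness, which happens with probability bounded below when $n\gg m^2$ and handle small $n$ directly via Hoeffding's without-replacement comparison. Since the paper cites \cite[Lemma 4.9]{Langliling23}, I expect this standard route to suffice.
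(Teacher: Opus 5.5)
Your proposal is correct and is essentially the argument the paper intends. The paper gives no proof of its own, only remarking that the lemma follows from a standard concentration analysis and citing \cite[Lemma~4.9]{Langliling23}. Your ingredients are exactly that analysis: a random $m$-subset with two-stage exposure, the exact expectation $e(L)\binom{m-\ell}{r}/\binom{n-\ell}{r}$, the swap-Lipschitz constant $\binom{m-\ell-1}{r-1}$, the tail bound $\exp(-2\eta^2(m-\ell)/r^2)$, and a union bound over the $\binom{m}{\ell}$ sets. The bounded-differences inequality for uniform subsets follows directly from the exposure martingale with the transposition coupling you mention. So the with-replacement fallback in your last paragraph is unnecessary, and its conditions on $n$ versus $m^2$ are in any case muddled.
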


In this section, we choose the integer $M$ introduced in the previous subsection sufficiently
	large so that, with $Q=bM$,
	\begin{equation}\label{eq:Q-large}
		r\ell Q^{-1/r}<\frac{\beta}{4}.    
	\end{equation}
	Let $\rho=\rho(Q,h)$ be the constant from \cref{lem:rho}.  Choose
	$\eta>0$ so that $ \eta<\min\{\alpha/10,\rho/10\}$.
	Finally, choose $m$ sufficiently large such that \cref{lem:sampling}
	applies with error $\eta$, and
	\begin{equation}\label{eq:m-choice}
		m>rQ(\ell-1),\qquad \binom m\ell\ge b,\qquad
		\frac{\beta+\eta+r\ell/m}{1-rQ(\ell-1)/m}<\beta+\rho.    
	\end{equation}
	The last condition is possible because the left-hand side tends to
	$\beta+\eta$ as $m\to\infty$.
	
	Let $\calH_m$ be a set containing one representative from each
	isomorphism class of $m$-vertex $k$-graphs $F$ with
	$\delta_\ell(F)\ge(\alpha-\eta)\binom{m-\ell}{r}$.
	For $F\in\calH_m$, write
	\[
	\calB(F):=\binom{\binom{V(F)}{\ell}}{b}
	\]
	for the set of all $b$-element families of distinct $\ell$-subsets
	of $V(F)$.
	
	\begin{definition}[The extension family]
		\label{def:extension}
		Let $F\in\calH_m$.  We call $F$ the \emph{base} $k$-graph.  For
		each $B\in\calB(F)$, add a new $r$-set $W_B$, with all these sets
		pairwise disjoint and disjoint from $V(F)$.  Then choose a set
		$P_B\subseteq B$ of size $a+1$, and add the edges $W_B\cup T$ for
		all $T\in P_B$.  All choices of the sets $P_B$ are allowed.  Let
		$\calQ(F)$ be the finite family of all $k$-graphs obtained in this
		way.
	\end{definition}
	Define
	\begin{equation}\label{eq:Falpha-def}
		\calF_\alpha:=\bigcup_{F\in\calH_m}\calQ(F).
	\end{equation}
	
	Note that $\calF_\alpha$ is finite, since $\calH_m$ is finite.  We
	shall prove that $\gamma^{(k)}_\ell(\calF_\alpha)=\alpha$.

	\subsection{The upper bound}
	\label{subsec:rational-upper}
	
	\begin{proposition}
		\label{prop:rational-upper}
		The family $\calF_\alpha$ satisfies
		\[
		\gamma^{(k)}_\ell(\calF_\alpha)\le\alpha.
		\]
	\end{proposition}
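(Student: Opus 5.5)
We take $\eps>0$ small and $H$ an $n$-vertex $k$-graph with $\delta_\ell(H)\ge(\alpha+\eps)\binom{n-\ell}{r}$, $n$ large. We show that $H$ contains a member of $\calF_\alpha$, so that $\gamma^{(k)}_\ell(\calF_\alpha)\le\alpha+\eps$ for every $\eps$.

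First we apply \cref{lem:sampling} with error $\eta$ (assuming $\eps<\eta$, so that $\alpha+\eps-\eta\ge\alpha-\eta$). This gives an $m$-set $U$ with $\delta_\ell(H[U])\ge(\alpha-\eta)\binom{m-\ell}{r}$. Then $H[U]$ is isomorphic to some $F\in\calH_m$, and we fix such an isomorphism.

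It remains to embed the pairwise disjoint new $r$-sets $W_B$, one for each $B\in\calB(F)$. For a fixed $B$, we count the $r$-sets $W\subseteq V(H)\setminus U$ that lie in the links $N_H(T)$ of at least $a+1$ sets $T\in B$. Each $T\in B$ has at least $(\alpha+\eps)\binom{n-\ell}{r}-O(mn^{r-1})$ link members avoiding $U$. Summing over the $b$ sets in $B$ gives total incidence at least $b(\alpha+\eps/2)\binom{n}{r}=(a+b\eps/2)\binom nr$. If fewer than $\frac{\eps}{4}\binom nr$ of the $r$-sets $W$ had multiplicity at least $a+1$, the total incidence would be at most $a\binom nr+b\cdot\frac{\eps}{4}\binom nr$, a contradiction. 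Hence a positive fraction $c=c(\eps,b)>0$ of all $r$-sets in $V(H)\setminus U$ are \emph{good} for $B$, and any good $W$ supplies a valid choice of $P_B$ of size $a+1$.

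We then choose the $W_B$ greedily. The number $|\calB(F)|=\binom{\binom m\ell}{b}$ is a constant. At each step, the $r$-sets meeting previously chosen vertices number $O(n^{r-1})$, which is fewer than $c\binom nr$ for $n$ large, so a good $W_B$ disjoint from everything chosen so far always exists. Together with $U$, the sets $W_B$ and the chosen edges $W_B\cup T$ ($T\in P_B$) form a member of $\calQ(F)\subseteq\calF_\alpha$, possibly as a non-induced subgraph, which is enough because we only need containment.

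The main obstacle is the counting step. It requires the bound to beat $\alpha$ strictly, so that multiplicity $a+1$ is forced, and it requires the error from removing $U$ and the earlier $W_B$ to be negligible. Both are routine because $m$ and $|\calB(F)|$ are constants. The remaining subtlety, that $\eps$ must be taken below $\eta$ so that $H[U]$ lands in $\calH_m$, is harmless since the statement only needs $\eps\to0$.
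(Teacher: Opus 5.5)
Your proof is correct and follows the paper's argument essentially step for step. Like the paper, you sample an $m$-set $U$ via \cref{lem:sampling}, use double counting (with $b\alpha=a$) to show that $\Omega(n^r)$ of the $r$-sets outside $U$ lie in the links of at least $a+1$ members of each $B\in\calB(F)$, and then choose the $W_B$ greedily and pairwise disjoint. (Your restriction $\eps<\eta$ is unnecessary, since $\alpha+\eps-\eta\ge\alpha-\eta$ holds for every $\eps\ge0$, but it does no harm.)
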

	
	\begin{proof}
		It suffices to show that for any $0<\eps<1-\alpha$, every
		sufficiently large $n$-vertex $k$-graph $H$ satisfying $\delta_\ell(H)\ge(\alpha+\eps)\binom{n-\ell}{r}$
		contains a copy of some $k$-graph in $\calF_\alpha$.
		
		By \cref{lem:sampling}, there is an $m$-set $U\subseteq V(H)$ such
		that
		\[
		\delta_\ell(H[U])\ge(\alpha+\eps-\eta)\binom{m-\ell}{r}
		\ge(\alpha-\eta)\binom{m-\ell}{r}.
		\]
		Consequently, $H[U]$ is isomorphic to some $F\in\calH_m$.  We identify this
		copy of $F$ with $H[U]$.
		
		Fix an arbitrary $B=\{T_1,\ldots,T_b\}\in\calB(F)$.  For each $i\in[b]$, all but
		$O(n^{r-1})$ of the $r$-sets counted by $d_H(T_i)$ are disjoint
		from $U$.  Hence, for all sufficiently large $n$, there are at least
		$(\alpha+\eps/2)\binom nr$ sets $R\in\binom{V(H)\setminus U}{r}$
		with $T_i\cup R\in E(H)$.  
		Let $\calR_B$ be the family of all $r$-sets
		$R\subseteq V(H)\setminus U$ that are adjacent to at least $a+1$
		members of $B$. By double counting, we have
		\[
		b|\calR_B|+a\left(\binom{n}{r}-|\calR_B|\right)\ge b\left(\alpha+\frac{\eps}{2}\right)\binom nr,
		\]
		which implies that $|\calR_B|\ge b\eps\binom nr/(2(b-a))=\Omega(n^r)$.
		
		Since the number of members of $\calB(F)$ depends only on $m$, we may choose
		greedily, for every $B\in\calB(F)$, a new $r$-set
		$W_B\subseteq V(H)\setminus U$ such that the chosen $W_B$'s are
		pairwise disjoint and $W_B$ is adjacent to at least $a+1$ members of
		$B$.  Indeed, at each step only $O(1)$ vertices have
		already been used, and only $O(n^{r-1})$ $r$-sets meet them, while there are $\Omega(n^r)$
		available choices.
		
		For each $B$, choose $a+1$ members of $B$ adjacent to $W_B$ and
		call this set $P_B$.  Then the copy $H[U]$, together with the chosen
		sets $W_B$, contains a member of $\calQ(F)\subseteq\calF_\alpha$.
		Therefore every sufficiently large $H$ with $\delta_\ell(H)\ge(\alpha+\eps)\binom{n-\ell}{r}$
		contains a copy of some member of $\calF_\alpha$.  Since $\eps>0$ was arbitrary,
		we have $\gamma^{(k)}_\ell(\calF_\alpha)\le\alpha $.
	\end{proof}

	\subsection{The lower construction}
	\label{subsec:rational-lower-construction}
	
	We now construct  $\calF_\alpha$-free $k$-graphs $G_{\alpha,n}$ on $n$ vertices with
	normalized minimum $\ell$-degree asymptotic to $\alpha$.  The
	construction uses a non-uniform distribution on the color set $\ZZ_Q$,
	where $Q=bM$ has been fixed in \cref{subsec:forbidden-family}.  For all
	sufficiently large $n$, let $G_{\alpha,n}$ be the $k$-graph on
	vertex set $V$ defined as follows.
	
	\begin{itemize}
		\item First define a probability vector $(\mu_0,\ldots,\mu_{Q-1})$ by
		\[
		\mu_i=
		\left(\frac{Q-i}{Q}\right)^{1/r}
		-
		\left(\frac{Q-i-1}{Q}\right)^{1/r},
		\text{ for }0\le i<Q.
		\]
		
		\item Partition $V$ into $Q$ color classes
		$V_0,\ldots,V_{Q-1}$
		such that $|V_i|=\mu_i n+O(1)$ for every $0\le i<Q$.  If
		$v\in V_i$, we write ${\rm col}(v)=i$.
		
		\item Finally define the edges of $G_{\alpha,n}$.  Let
		$K\in\binom{V}k$.  If no color appears in $K$ at least $\ell$
		times, then put $K$ into $G_{\alpha,n}$.  Otherwise, since
		$2\ell>k$, there is a unique color which appears in $K$ at least
		$\ell$ times; call it $c$.  Delete any $\ell$ vertices of color
		$c$ from $K$, denoted by $L$.  Let
		${\rm mcol}(K\setminus L)$ be the minimum color among these remaining $r$ vertices.
		We put  $K$ into $G_{\alpha,n}$ if and only if 	${\rm mcol}(K\setminus L) \notin I_c$.
	\end{itemize}
	
	We remark two key properties of the vector $(\mu_0,\ldots,\mu_{Q-1})$.  Let $X$ be
	a random variable taking values in $\ZZ_Q$ with
	$\mathbb P(X=i)=\mu_i$ for every $0\le i<Q$. Then
	\[
	\mathbb P(X\ge i)=\left(\frac{Q-i}{Q}\right)^{1/r}.
	\]
	Thus, we have $\mu_i>0$ and $\sum_i\mu_i=1$.  
	Since the function
	$x\mapsto x^{1/r}$ is concave on $[0,\infty)$, its largest increment
	on the points $0,1,\ldots,Q$ occurs between $0$ and $1$.  Hence, we have the simple bound
	\begin{equation}
		\max_i\mu_i\le Q^{-1/r}.
		\label{eq:mu-max}
	\end{equation}
	Moreover, if
	$X_1,\ldots,X_r$ are independent copies of $X$, then
	\[
	\mathbb P(\min_j X_j\ge i)
	=
	\prod_{j=1}^r\mathbb P(X_j\ge i)
	=
	\frac{Q-i}{Q},
	\]
	which implies that 
	\begin{equation}
		\mathbb P(\min_j X_j=i)=\frac1Q
		\qquad\text{for every }0\le i<Q.
		\label{eq:uniform-min}
	\end{equation}
	In other words, the minimum of $r$ independent colors is uniform on
	$\ZZ_Q$. This is the reason for choosing the above non-uniform
	distribution.
	
	We shall also use the following consequence of the choice of the color
	classes.  Since $Q$ is fixed and
	$|V_i|=\mu_i n+O(1)$ for every $i\in\ZZ_Q$, sampling any fixed
	number of vertices without replacement has, asymptotically, the same
	color distribution as sampling independently from  $(\mu_0,\ldots,\mu_{Q-1})$.
	More precisely, for every fixed integer $s$, if $Z\subseteq V$ has bounded
	size and $(v_1,\ldots,v_s)$ is a uniformly chosen $s$-tuple of
	distinct vertices from $V\setminus Z$, then
	\[
	\mathbb P\bigl(({\rm col}(v_1),\ldots,{\rm col}(v_s))
	=(i_1,\ldots,i_s)\bigr)
	=
	\prod_{j=1}^s\mu_{i_j}+o(1),
	\]
	where the $o(1)$ term is uniform over all choices of $Z$ with
	$|Z|\le C$ and all color vectors $(i_1,\ldots,i_s)$.

	We next verify the minimum $\ell$-degree of $G_{\alpha,n}$.
	
	\begin{proposition}
		\label{prop:G-degree}
		The $k$-graph $G_{\alpha,n}$ satisfies
		\[
		\delta_\ell(G_{\alpha,n})
		\ge
		(\alpha-o(1))\binom{n-\ell}{r}.
		\]
	\end{proposition}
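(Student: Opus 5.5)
Fix an arbitrary $\ell$-set $T\subseteq V$. We estimate $d_{G_{\alpha,n}}(T)$ by choosing a uniformly random $r$-set $R\subseteq V\setminus T$ and lower-bounding $\mathbb P(T\cup R\in E(G_{\alpha,n}))$. By the sampling remark after \eqref{eq:uniform-min}, applied with $Z=T$ and $s=r$, the colors of the vertices of $R$ are, up to an $o(1)$ error that is uniform in $T$, independent samples from $(\mu_0,\dots,\mu_{Q-1})$. So it suffices to show that if $X_1,\dots,X_r$ are i.i.d.\ with law $\mu$, then the event ``$T\cup\{v_1,\dots,v_r\}$ is an edge'' has probability at least $\alpha-O(Q^{-1/r})$. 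We will then need the error to be $o(1)$, and this is the main obstacle; see the last paragraph.

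We split according to the color multiset of $T$. Since $|T|=\ell$, a color $c$ appears in $K=T\cup R$ at least $\ell$ times only if it appears in $T$ at least $\ell-r$ times; by $2\ell>k$ there is at most one such $c$ for $K$. Write $c_T$ for the color of maximal multiplicity in $T$, with multiplicity $s$. The main case is when $T$ is monochromatic of color $c$ ($s=\ell$). Then $K$ always has $c$ at least $\ell$ times. Choosing $L=T$, the rule asks that $\min_j X_j\notin I_c$. By \eqref{eq:uniform-min} this happens with probability $1-h/Q=\alpha$ exactly.

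One point needs checking: the definition deletes \emph{any} $\ell$ vertices of color $c$, so $\mathrm{mcol}(K\setminus L)$ must not depend on the choice of $L$. Indeed, $K\setminus L$ consists of the same multiset of colors whatever $L$ is, namely the color multiset of $K$ with $\ell$ copies of $c$ removed. Hence the edge rule is well defined.

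In the remaining cases $T$ is not monochromatic. Then the event that $K$ has some color at least $\ell$ times forces $R$ to contain at least $\ell-s\ge1$ vertices of color $c_T$. By \eqref{eq:mu-max} this has probability at most $r\max_i\mu_i\le rQ^{-1/r}$, and otherwise $K$ is an edge. So the degree ratio here is at least $1-rQ^{-1/r}$. This exceeds $\alpha$ by \eqref{eq:Q-large}, since $rQ^{-1/r}<\beta/4<\beta=1-\alpha$.

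Combining the cases, every $T$ has $d(T)\ge(\alpha-o(1))\binom{n-\ell}{r}$: in the monochromatic case the probability is exactly $\alpha$ up to the sampling error, and in the other case it is at least $1-rQ^{-1/r}>\alpha$. So the $O(Q^{-1/r})$ loss never appears below $\alpha$, and only the $o(1)$ from sampling remains. The delicate step is the monochromatic case, where the bound is tight. We need to confirm there that replacing sampling without replacement from $V\setminus T$ by i.i.d.\ sampling costs only $o(1)$, which the stated uniform approximation gives since $|T|=\ell$ is bounded and $Q$ is fixed. Finally, the random $r$-set counts $\binom{n-\ell}{r}$ sets, which matches the normalization.
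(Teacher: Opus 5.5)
Your case split (monochromatic versus non-monochromatic $T$) is the same as the paper's. The monochromatic case is handled exactly as in the paper, via \eqref{eq:uniform-min} and the sampling estimate. Your check that the edge rule does not depend on the choice of $L$ is correct; the paper leaves it implicit.

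\textbf{The error.} The non-monochromatic case contains a false step. You assert that if $K=T\cup R$ has a color of multiplicity at least $\ell$, then $R$ must contain a vertex of the color $c_T$ of maximal multiplicity in $T$. But the heavy color of $K$ need not be $c_T$. It can be any color that appears in $T$ at least $\ell-r$ times. Since $\ell-r$ can be as small as $1$, there may be several such colors, and $c_T$ need not even be unique. For example, take $k=5$, $\ell=3$, $r=2$, let $T$ have colors $(1,1,2)$, and let $R$ have colors $(2,2)$. Then color $2$ appears $3=\ell$ times in $K$, while $R$ contains no vertex of color $c_T=1$. So your bound $r\max_i\mu_i$ on the probability of a heavy color is not justified.

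\textbf{The fix (as in the paper).} Since $|R|=r<\ell$, any heavy color of $K$ must already appear in $T$. Since $T$ is not monochromatic, it appears there at most $\ell-1$ times, so $R$ must contain a vertex of that color. A union bound over the at most $\ell$ colors present in $T$ and the $r$ vertices of $R$ then gives
\[
\mathbb P(T\cup R\notin E(G_{\alpha,n}))\le r\ell\max_i\mu_i+o(1)\le r\ell Q^{-1/r}+o(1).
\]
By \eqref{eq:Q-large} this is below $\beta/4$; that is exactly why the factor $\ell$ appears in \eqref{eq:Q-large}. Hence the degree of a non-monochromatic $T$ is at least $(1-\beta/4-o(1))\binom{n-\ell}{r}>(\alpha-o(1))\binom{n-\ell}{r}$. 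With this correction the rest of your argument goes through unchanged.
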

	
	\begin{proof}
		Fix an arbitrary $\ell$-set $T\subseteq V$. 
		Let $S$ be a uniformly chosen $r$-set from $V\setminus T$.  Since
		$r,\ell$ and $Q$ are fixed, all $o(1)$ terms below are uniform in
		the choice of $T$.
		
		First suppose that $T$ is monochromatic, say $T\subseteq V_c$.  For
		every $r$-set $S\subseteq V\setminus T$, the $k$-set $T\cup S$
		has color $c$ appearing at least $\ell$ times.  By the definition of $G_{\alpha,n}$, $T\cup S\in E(G_{\alpha,n})$ if and only if 
		${\rm mcol}(S)\notin I_c$.
		By \eqref{eq:uniform-min} and the preceding sampling estimate,
		\[
		\mathbb P({\rm mcol}(S) \in I_c)
		=
		\frac{|I_c|}{Q}+o(1)
		=
		\frac hQ+o(1)
		=
		\beta+o(1).
		\]
		Therefore, the number of $r$-sets $S$ for which $T\cup S$ is an edge
		is
		\[
		(1-\beta-o(1))\binom{n-\ell}{r}
		=
		(\alpha-o(1))\binom{n-\ell}{r}.
		\]
		
		Now suppose that $T$ is not monochromatic.  We show that in this case
		$T$ has even larger degree.  If $T\cup S\notin E(G_{\alpha,n})$, then
		some color $i$ appears at least $\ell$ times in $T\cup S$.  Since
		$T$ is not monochromatic and $|S|=r<\ell$, this color must already appear in $T$, and $S$ must contain at
		least one vertex of color $i$.  There are at most $\ell$ possible
		choices for $i$.  By \eqref{eq:mu-max} and a union bound, we get
		\[
		\mathbb P(T\cup S\notin E(G_{\alpha,n}))
		\le
		r\ell\max_i\mu_i+o(1)
		\le
		r\ell Q^{-1/r}+o(1).
		\]
		By the choice of $Q$ in \eqref{eq:Q-large}, this is smaller than
		$\beta/3$ for all sufficiently large $n$.  Consequently,
		\[
		d_{G_{\alpha,n}}(T)
		\ge
		\left(1-\frac{\beta}{3}-o(1)\right)\binom{n-\ell}{r}
		>
		(\alpha-o(1))\binom{n-\ell}{r}.
		\qedhere\]
	\end{proof}

	Finally, we prove the above $k$-graphs $G_{\alpha,n}$ are $\calF_\alpha$-free.

	\begin{proposition}\label{prop:rational-lower}
		For each rational number $\alpha\in (0,1)$, there exists $n_0\in \mathbb N$ such that every $k$-graph $G_{\alpha,n}$ is $\calF_\alpha$-free for $n\ge n_0$.
	\end{proposition}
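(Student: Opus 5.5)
We argue by contradiction. Suppose that for some large $n$ the $k$-graph $G_{\alpha,n}$ contains a copy of a member of $\calQ(F)$ for some $F\in\calH_m$. Let $U$ be the $m$-vertex set carrying the base copy of $F$. Since $F$ is a subgraph of $G_{\alpha,n}[U]$, we have $\delta_\ell(G_{\alpha,n}[U])\ge(\alpha-\eta)\binom{m-\ell}{r}$. Let $X\subseteq\ZZ_Q$ be the set of colors $c$ with $|V_c\cap U|\ge\ell$, and let $U_X$ be the set of vertices of $U$ whose colors lie in $X$. Then $|U\setminus U_X|\le Q(\ell-1)$, and $X\neq\emptyset$ because $m>rQ(\ell-1)$. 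The proof has two steps.

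\emph{Step 1: $X$ contains a complete residue class modulo $M$.} Suppose not. Let $\nu$ be the distribution of ${\rm mcol}(S)$, where $S$ is a uniform random $r$-subset of $U_X$. Then $\nu$ is supported on $X$, so \cref{lem:rho} gives some $x\in X$ with $\nu(I_x)\ge\beta+\rho$. Fix a monochromatic $\ell$-set $T\subseteq V_x\cap U$.

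For an $r$-set $S\subseteq U\setminus T$, the color $x$ appears at least $\ell$ times in $T\cup S$. Removing $\ell$ vertices of color $x$ always leaves the same multiset of colors as $S$, so $T\cup S$ is a non-edge whenever ${\rm mcol}(S)\in I_x$.

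Now let $S$ be a uniform $r$-subset of $U$. It lies in $U_X$ with probability at least $1-rQ(\ell-1)/m$, and it meets $T$ with probability at most $r\ell/m$. Hence the proportion of $r$-sets in $U\setminus T$ that complete $T$ to a non-edge is at least
\[
(\beta+\rho)\bigl(1-rQ(\ell-1)/m\bigr)-r\ell/m .
\]
By the last condition in \eqref{eq:m-choice}, this exceeds $\beta+\eta$, up to the harmless change of normalisation from $\binom{m}{r}$ to $\binom{m-\ell}{r}$. Thus $d_{G_{\alpha,n}[U]}(T)<(\alpha-\eta)\binom{m-\ell}{r}$, a contradiction. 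This bookkeeping of the normalisations is the only delicate point, and the inequality in \eqref{eq:m-choice} was chosen precisely for it.

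\emph{Step 2: deriving the contradiction.} Let $R\subseteq X$ be a complete residue class modulo $M$, so $|R|=b$. For each $c\in R$ pick an $\ell$-set $T_c\subseteq V_c\cap U$. These sets are distinct, so $B:=\{T_c:c\in R\}\in\calB(F)$. In the forbidden copy, the new $r$-set $W_B$ lies outside $U$ and forms edges with the $a+1$ members of $P_B\subseteq B$.

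Write $y:={\rm mcol}(W_B)$. For each $c\in R$, the color $c$ appears at least $\ell$ times in $T_c\cup W_B$. If $W_B$ also contains vertices of color $c$, swapping them for deleted vertices leaves the color multiset unchanged, so the remaining $r$ vertices always have minimum color $y$. Hence $T_c\cup W_B$ is an edge if and only if $y\notin I_c$.

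We have $y\in I_c$ exactly when $c\in\{y-h+1,\dots,y\}$, which is itself a cyclic interval of length $h$. By the final assertion of \cref{lem:cyclic}, this interval contains exactly $b-a$ points of $R$. So $W_B$ is adjacent to exactly $a$ members of $B$, contradicting $|P_B|=a+1$. Therefore $G_{\alpha,n}$ is $\calF_\alpha$-free for all large $n$.
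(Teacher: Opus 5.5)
Your proof is correct and follows essentially the same route as the paper: you show that $X$ contains a complete residue class using \cref{lem:rho} together with the last condition in \eqref{eq:m-choice}, and then you count the neighbours of $W_B$ in $B$ via \cref{lem:cyclic}. The differences are cosmetic. You argue contrapositively, using the minimum-colour distribution of a random $r$-subset of $U_X$ (that is, conditioning on $S\subseteq U_X$, whereas the paper conditions on ${\rm mcol}(S)\in X$). You also make explicit the reindexing $\{c: y\in I_c\}=I_{y-h+1}$, which the paper leaves implicit.
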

	
	\begin{proof}
		Suppose, for a contradiction, that $G_{\alpha,n}$ contains a copy of
		some $J\in\calF_\alpha$. Then $J$ is an extension of some
		$F\in\calH_m$.  We identify
		this copy with its image in $G_{\alpha,n}$, and let $U\subseteq V(G_{\alpha,n})$ such that $|U|=m$ and $F\subseteq G_{\alpha,n}[U]$.

		Let $X=\{c\in\ZZ_Q: |U\cap V_c|\ge\ell\}$. Then $X$ is nonempty, otherwise $m\le Q(\ell-1)$, contradicting
		\eqref{eq:m-choice}.
		We next show that $X$ contains a complete residue class modulo $M$.
		By \cref{lem:rho}, it suffices to construct a probability distribution
		$\nu$ supported on $X$ such that $\nu(I_c)<\beta+\rho$ for every $c\in X$.
		
		We first consider a uniform probability measure on $\binom Ur$.  
		For
		$R\in\binom Ur$, write ${\rm mcol}(R):=\min\{{\rm col}(v):v\in R\}$.
		For $A\subseteq\ZZ_Q$, define
		\[
		\nu'(A):=
		\frac{|\{R\in\binom Ur:{\rm mcol}(R)\in A\}|}{\binom mr}.
		\]
		Thus, $\nu'$ is the distribution of the minimum color of a uniformly
		chosen $r$-set from $U$.

		Fix $c\in X$.  By the definition of $X$, we may choose an
		$\ell$-set $T_c\subseteq U\cap V_c$.  
		Consider an $r$-set $R\in\binom{U\setminus T_c}{r}$.  If
		${\rm mcol}(R)\in I_c$, then $T_c\cup R$ is a non-edge of
		$G_{\alpha,n}$.  Since $F\subseteq G_{\alpha,n}[U]$, such an $R$
		is not counted by $d_F(T_c)$.  Therefore,
		\begin{equation}\label{eq:U-minus-T-bound}
			\mathbb P
			\bigl({\rm mcol}(R) \in I_c \mid R\cap T_c=\emptyset \bigr)
			\le
			1-\frac{d_F(T_c)}{\binom{m-\ell}{r}}
			\le
			\beta+\eta.
		\end{equation}
		Moreover, for every fixed
		$v\in T_c$ and every random $r$-set $R\in \binom Ur$, we have $ \mathbb P(v\in R)=\frac rm$
		and hence, by the union bound,
		\[
		\mathbb P(R\cap T_c\ne\emptyset)
		\le
		\sum_{v\in T_c}\mathbb P(v\in R)
		=
		\frac{r\ell}{m}.
		\]
		Combining this with \eqref{eq:U-minus-T-bound}, we obtain
		\begin{equation}\label{eq:nuU-bound}
			\nu'(I_c)=
			\mathbb P
			\left({\rm mcol}(R) \in I_c  \mid R\in \binom Ur \right)
			\le
			\beta+\eta+\frac{r\ell}{m}
			\qquad\text{for every }c\in X.	
		\end{equation}

		The distribution $\nu'$ is not necessarily supported on $X$, so we
		now estimate its mass outside $X$.  Let $Z:=\bigcup_{c\notin X}(U\cap V_c)$.
		Every color outside $X$ appears in $U$ fewer than $\ell$ times,
		and hence $|Z|\le Q(\ell-1)$.
		If ${\rm mcol}(R)\notin X$, then
		$R$ contains a vertex from $Z$.  Consequently,
		\begin{equation}\label{eq:outside-X}
			\nu'(\ZZ_Q\setminus X)
			\le
			\mathbb P(R\cap Z\ne\emptyset)
			\le
			\sum_{v\in Z}\mathbb P(v\in R)
			\le
			\frac{rQ(\ell-1)}{m}
			<1,
		\end{equation}
		where the last inequality follows from \eqref{eq:m-choice}.  Hence
		$\nu'(X)>0$.
		
		Let $\nu$ be the
		distribution $\nu'$ conditioned on the event that the minimum color
		lies in $X$.  That is,
		for $A\subseteq X$, set $\nu(A):=\nu'(A)/\nu'(X)$.
		Then $\nu$ is supported on $X$.  For every $c\in X$, using
		\eqref{eq:nuU-bound}, \eqref{eq:outside-X}, and \eqref{eq:m-choice}, we
		obtain
		\[
		\nu(I_c)
		=
		\nu(I_c\cap X)
		\le
		\frac{\nu'(I_c)}{\nu'(X)}
		\le
		\frac{\beta+\eta+r\ell/m}{1-rQ(\ell-1)/m}
		<
		\beta+\rho.
		\]
		
		By \cref{lem:rho}, the set $X$ contains a complete residue class
		$R_0=\{x\in \mathbb{Z}_Q: x \equiv i \pmod M\}$ for some $i\in  \mathbb{Z}_M$.  Since $Q=bM$,
		we have $ |R_0|=b$.
		For every $c\in R_0$, choose an $\ell$-set $T_c\subseteq U\cap V_c$
		and let $B_0:=\{T_c:c\in R_0\}$.
		Then $B_0\in\calB(F)$.  Since $J$ is an extension of $F$, the
		family $B_0$ has a corresponding new $r$-set $W_{B_0}$, and this
		$r$-set forms edges with $a+1$ members of $B_0$.
		
		Let $S$ be the image of $W_{B_0}$ in $G_{\alpha,n}$, and set $y:={\rm mcol}(S)$.
		By \cref{lem:cyclic}, exactly $b-a$ colors $c\in R_0$ satisfy
		$y\in I_c$.  For each such $c$, the set $T_c$ is monochromatic of
		color $c$, and the definition of $G_{\alpha,n}$ gives
		$T_c\cup S\notin E(G_{\alpha,n})$.
		Therefore, $S$ can form edges with at most
		$ b-(b-a)=a$
		members of $B_0$, contradicting the fact that $W_{B_0}$ forms edges
		with $a+1$ members of $B_0$ in the extension $J$.
		This contradiction shows that $G_{\alpha,n}$ is $\calF_\alpha$-free.
	\end{proof}
	
	\begin{proof}[Proof of \cref{thm:rational-main}]
		If $\alpha=0$, take $\calF$ to consist of one $k$-edge.  Then every
		$\calF$-free $k$-graph is empty, and so
		$\gamma^{(k)}_\ell(\calF)=0$. If $0<\alpha<1$, then construct the finite family
		$\calF_\alpha$ as in \eqref{eq:Falpha-def}.  By
		\cref{prop:rational-upper}, we have $\gamma^{(k)}_\ell(\calF_\alpha)\le \alpha$.
		On the other hand, by \cref{prop:G-degree,prop:rational-lower}, the
		graphs $G_{\alpha,n}$ are $\calF_\alpha$-free for all sufficiently
		large $n$, and satisfy
		\[
		\delta_\ell(G_{\alpha,n})
		\ge
		(\alpha-o(1))\binom{n-\ell}{r}.
		\]
		Thus, $\gamma^{(k)}_\ell(\calF_\alpha)\ge\alpha $.
		Hence $\gamma^{(k)}_\ell(\calF_\alpha)=\alpha$, and 
		$\alpha\in\Gamma^{k,\ell}_{\rm fin}$.
	\end{proof}

\section{A color-forcing construction}\label{sec:qcolor}
	
In this section, we prove \cref{thm:q-color-main} by a color-forcing construction.  
	Given an integer $q\ge 1$, we say that a $k$-graph $F$ is \emph{$q$-colorable} if $V(F)$ can be
	partitioned into $q$ sets $V_1,\ldots,V_q$ such that no edge of $F$ lies entirely inside one of the parts. Then we have the following observation.
	
	\begin{observation}	\label{obs:coloring-lower}
Let $k>\ell\ge1$ and $q\ge1$.  If a $k$-graph $F$ is not $q$-colorable, then
		\[
		\gamma^{(k)}_\ell(F)\ge 1-q^{\ell-k}.
		\]
	\end{observation}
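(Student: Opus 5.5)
We prove \cref{obs:coloring-lower} with the balanced complete $q$-partite construction, which is the natural analogue of the Tur\'an graph. Fix $n$ and partition an $n$-vertex set $V$ into $q$ parts $V_1,\dots,V_q$ of sizes $\lfloor n/q\rfloor$ or $\lceil n/q\rceil$. Let $H_n$ be the $k$-graph whose edges are exactly the $k$-subsets of $V$ not contained in a single part.

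First, $H_n$ is $F$-free. A copy of $F$ in $H_n$ pulls the partition $V_1,\dots,V_q$ back to a partition of $V(F)$ into $q$ sets (possibly empty). No edge of $F$ lies inside one of these sets, because its image would be an edge of $H_n$ inside a single part. So $F$ would be $q$-colorable, a contradiction.

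Next we bound $\delta_\ell(H_n)$. Fix an $\ell$-set $T$ and count the $(k-\ell)$-sets $S\subseteq V\setminus T$ with $T\cup S\notin E(H_n)$. Such an $S$ exists only if $T$ lies inside one part, say $T\subseteq V_i$, and then $S$ must lie in $V_i\setminus T$. Hence the number of non-neighbours is either $0$ or
\[
\binom{|V_i|-\ell}{k-\ell}\le \binom{\lceil n/q\rceil-\ell}{k-\ell}=(q^{\ell-k}+o(1))\binom{n-\ell}{k-\ell}.
\]
This gives
\[
\delta_\ell(H_n)\ge (1-q^{\ell-k}-o(1))\binom{n-\ell}{k-\ell}.
\]
Therefore $\ex^{(k)}_\ell(n,F)\ge \delta_\ell(H_n)$, and dividing by $\binom{n-\ell}{k-\ell}$ and letting $n\to\infty$ (the limit exists by Lo and Markstr\"om) yields $\gamma^{(k)}_\ell(F)\ge 1-q^{\ell-k}$.

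The argument has no real obstacle. The only points requiring care are that the pulled-back partition may have empty parts, which is harmless for $q$-colorability, and the asymptotic estimate $\binom{n/q+O(1)}{k-\ell}\big/\binom{n-\ell}{k-\ell}\to q^{-(k-\ell)}$, which holds because $k-\ell$ is fixed.
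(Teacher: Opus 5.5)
Your proof is correct and is essentially the paper's argument: the same balanced $q$-partite $k$-graph whose edges are the $k$-sets not contained in a single part, and the same count of non-neighbours of an $\ell$-set $T$. You also spell out why this $k$-graph is $F$-free, by pulling the partition back to $V(F)$ and allowing empty parts. The paper leaves that step implicit.
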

	
	\begin{proof}
Let $H$ be the $n$-vertex $k$-graph obtained from a balanced partition $V(H)=V_1\cup\cdots\cup V_q$ by taking as edges all $k$-sets not contained in a single part.
For any $\ell$-set $T\subseteq V(H)$, if $T$ is contained in some part $V_i$, then 
		\[
		d_H(T)\ge
		\binom{n-\ell}{k-\ell}
		-
		\binom{\lceil n/q\rceil-\ell}{k-\ell}
		=
		\left(1-q^{\ell-k}+o(1)\right)\binom{n-\ell}{k-\ell}.
		\]
		If $T$ is not contained in one part, then every $(k-\ell)$-set extends $T$
		to an edge of $H$.  Letting $n\to\infty$ gives the result.
	\end{proof}
	
	We now construct a $k$-graph $F_{\ell,q}^{(k)}$  for any $q\ge 1$ such that $\gamma^{(k)}_\ell(F_{\ell,q}^{(k)})=1-q^{\ell-k}$.

\begin{definition}	\label{def:Fq}
Fix integers $q\ge1$ and $k>\ell\ge1$. The $k$-graph $F_{\ell,q}^{(k)}$ is defined recursively as follows. When $\ell=1$, the recursion involves $1$-graphs at its
initial stage; as usual, a $1$-edge is a singleton.	
		\begin{enumerate}[label=\textup{(\roman*)}]
			\item For $q=1$ and $\ell\le j\le k$,  let $F^{(j)}_{\ell,1}$ be the $j$-graph consisting of one $j$-edge;
			
			\item For $q\ge2$, let $F^{(\ell)}_{\ell, q}$ be the complete $\ell$-graph $K^{(\ell)}_{q(\ell-1)+1}$ on $q(\ell-1)+1$ vertices;
			
			\item For $q\ge2$ and $\ell< j\le k$, define $F^{(j)}_{\ell,q}$ recursively as follows. Assume we have defined the $(j-1)$-graph $F^{(j-1)}_{\ell,q}$.
			Let  $m:=|E(F^{(j-1)}_{\ell,q})|$ and write
			$E(F^{(j-1)}_{\ell, q})=\{ e^{(j-1)}_1, e^{(j-1)}_2,\dots, e^{(j-1)}_{m}\}$. For each $i\in [m]$, take a new copy $S_i$ of $F^{(j)}_{\ell,q-1}$,  all these copies being pairwise disjoint and disjoint from the set $V(F^{(j-1)}_{\ell, q})$. Define $F^{(j)}_{\ell,q}$ by
			\[
			V(F^{(j)}_{\ell,q})
			:=
			V(F^{(j-1)}_{\ell,q})\cup \bigcup_{i=1}^{m} V(S_i),
			\]
			and 
			\[
			E(F^{(j)}_{\ell,q})
			:=
			\bigcup_{i=1}^{m} E(S_i)
			\cup
			\{e^{(j-1)}_i \cup\{v\}:  i\in [m],  v\in V(S_i)\}.
			\]
		\end{enumerate}
	\end{definition}

	\begin{lemma}\label{lem:C-not-colorable}
		For every $q\ge1$ and every $k>\ell\ge 1$, the $k$-graph $F^{(k)}_{\ell,q}$ is not $q$-colorable.
	\end{lemma}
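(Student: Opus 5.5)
We argue by induction, proving the stronger claim that $F^{(j)}_{\ell,q}$ is not $q$-colorable for every $q\ge1$ and every $\ell\le j\le k$. The outer induction is on $q$, and for fixed $q\ge2$ there is an inner induction on $j$ starting from $j=\ell$.

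\textbf{Base cases.}
\begin{itemize}
\item For $q=1$, the graph $F^{(j)}_{\ell,1}$ is a single $j$-edge. Any $1$-coloring puts that edge inside the unique part, so it is not $1$-colorable.
\item For $q\ge2$ and $j=\ell$, the graph is $K^{(\ell)}_{q(\ell-1)+1}$. By pigeonhole, any partition of its $q(\ell-1)+1$ vertices into $q$ parts has a part with at least $\ell$ vertices. Since the graph is complete, that part contains an $\ell$-edge. (When $\ell=1$ this reads: $K^{(1)}_1$ is one singleton edge, which lies in some part.)
\end{itemize}

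\textbf{Inductive step.} Let $q\ge2$ and $\ell<j\le k$. Assume that $F^{(j-1)}_{\ell,q}$ is not $q$-colorable (inner hypothesis) and that $F^{(j)}_{\ell,q-1}$ is not $(q-1)$-colorable (outer hypothesis, applied with the same $j$). Suppose for contradiction that $V_1,\dots,V_q$ is a proper $q$-coloring of $F^{(j)}_{\ell,q}$.

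\begin{enumerate}
\item Restrict the partition to $V(F^{(j-1)}_{\ell,q})$. By the inner hypothesis, some edge $e^{(j-1)}_i$ lies entirely inside one part, say $V_t$.
\item Consider the copy $S_i$ of $F^{(j)}_{\ell,q-1}$. If some $v\in V(S_i)$ lies in $V_t$, then $e^{(j-1)}_i\cup\{v\}$ is an edge of $F^{(j)}_{\ell,q}$ contained in $V_t$. This contradicts properness.
\item Hence $V(S_i)$ avoids $V_t$. The remaining $q-1$ parts then give a proper $(q-1)$-coloring of $S_i$, because every edge of $S_i$ is an edge of $F^{(j)}_{\ell,q}$. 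This contradicts the outer hypothesis.
\end{enumerate}
Taking $j=k$ gives the lemma.

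The only point needing care is that the recursion is well-founded. The outer hypothesis must hold for $F^{(j)}_{\ell,q-1}$ at every level $j\in[\ell,k]$, not only at $j=k$. That is why we induct on the statement for all $j$ simultaneously. It is also why the $q=1$ case is defined for all $\ell\le j\le k$. When $q-1=1$, the hypothesis is the single-edge base case.
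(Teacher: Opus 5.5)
Your proposal is correct and follows essentially the same argument as the paper: pigeonhole for the base $K^{(\ell)}_{q(\ell-1)+1}$, then a monochromatic edge $e^{(j-1)}_{i}$ in the base copy, and the dichotomy on the attached copy $S_i$ handled by induction on $q$. Your explicit formulation of the double induction over all levels $\ell\le j\le k$ simply makes precise what the paper's proof uses implicitly.
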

	
	\begin{proof}
		The case $q=1$ is immediate, since $F^{(k)}_{\ell, 1}$ consists of one edge.
		Let $q\ge2$.  The initial $\ell$-graph
		$F^{(\ell)}_{\ell, q}=K^{(\ell)}_{q(\ell-1)+1}$ is not $q$-colorable by the pigeonhole principle.
		
		Next consider $\ell< j\le k$.  Suppose, for a contradiction, that
		$F^{(j)}_{\ell, q}$  has a $q$-coloring.  By induction on $j$, the base copy $F^{(j-1)}_{\ell, q}$ contains a monochromatic
		$(j-1)$-edge $e^{(j-1)}_{i_0}$, say of color $\alpha$.  Consider the attached copy $S_{i_0}$ of
		$F^{(j)}_{\ell, q-1}$.  If some vertex $v\in V(S_{i_0})$ has color $\alpha$, then
		$e^{(j-1)}_{i_0}\cup\{v\}$ is a monochromatic $j$-edge.  Otherwise all vertices of
		$S_{i_0}$ receive colors different from $\alpha$, so $S_{i_0}$ is colored with
		at most $q-1$ colors. 
		By induction on $q$, $S_{i_0}$ contains a
		monochromatic $j$-edge. 
		In both cases we get a contradiction.
	\end{proof}
	
\begin{proposition}\label{prop:C-upper}
		For every $q\ge1$, $k>\ell\ge 1$ and every $\varepsilon>0$, every sufficiently large
		$n$-vertex $k$-graph $H$ with
		$\delta_\ell(H)\ge
		\left(1-q^{\ell-k}+\varepsilon\right)\binom{n-\ell}{k-\ell}$
		contains a copy of $F^{(k)}_{\ell,q}$.
	\end{proposition}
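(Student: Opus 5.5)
We argue by induction on $q+(k-\ell)$, following the recursive structure of \cref{def:Fq}. Write $r=k-\ell$.

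\emph{Base cases.} For $q=1$ the statement is trivial, since $F^{(k)}_{\ell,1}$ is a single edge and $\delta_\ell(H)>0$ gives an edge. For $k=\ell$ there is nothing to prove in the $k$-graph setting. However, the recursion passes through the $\ell$-graph $K^{(\ell)}_{q(\ell-1)+1}$. So the natural base statement is for $k=\ell+1$: every $(\ell+1)$-graph with $\delta_\ell\ge(1-1/q+\eps)(n-\ell)$ contains $F^{(\ell+1)}_{\ell,q}$. We handle this inside the general inductive step by treating the $\ell$-graph stage through a supersaturation/Ramsey-type statement.

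\emph{Inductive step.} Let $H$ be an $n$-vertex $k$-graph with $\delta_\ell(H)\ge(1-q^{-r}+\eps)\binom{n-\ell}{r}$. Set $\theta:=1-q^{-(r-1)}$, the target density for $F^{(k-1)}_{\ell,q}$.

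First we build an auxiliary $(k-1)$-graph $G$ on $V(H)$. A $(k-1)$-set $e$ is put into $G$ if its link in $H$ is large, namely $d_H(e)\ge(1-1/q+\eps')n$, where $\eps'\ll\eps$. The aim is to show $\delta_\ell(G)\ge(\theta+\eps'')\binom{n-\ell}{r-1}$.

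Fix an $\ell$-set $T$ and double count pairs $(S,v)$ with $T\cup S\cup\{v\}\in E(H)$, where $|S|=r-1$. The count is $r\,d_H(T)\ge r(1-q^{-r}+\eps)\binom{n-\ell}{r}$. Each $S$ with $T\cup S\in G$ contributes at most $n$, and each other $S$ contributes less than $(1-1/q+\eps')n$. Solving for the proportion $x$ of good sets $S$ gives
\[x+(1-x)(1-1/q)\ge 1-q^{-r}+\eps-\eps',\]
that is, $x/q\ge 1/q-q^{-r}+\eps-\eps'$, so $x\ge 1-q^{-(r-1)}+q(\eps-\eps')$.

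This identity $1-q^{-r}=(1-1/q)+\tfrac1q(1-q^{-(r-1)})$ is exactly why the value $1-q^{\ell-k}$ is consistent with the recursion. When $r-1=0$, i.e.\ $G$ is an $\ell$-graph, the bound says $G$ has density more than $1-1/q+\eps$ times... In that case we need $G\supseteq K^{(\ell)}_{q(\ell-1)+1}$. Here the computation gives that all but a small fraction of $\ell$-sets lie in $G$. More precisely, every $\ell$-set lies in $G$ once $x>0$ is read correctly, since $\binom{n-\ell}{0}=1$, so $x=1$ and $G$ is complete.

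Applying the induction hypothesis (or the complete-graph observation when $r=1$), we obtain copies of $F^{(k-1)}_{\ell,q}$ in $G$. By supersaturation, which follows from \cref{lem:sampling} plus an averaging argument, there are in fact $\Omega(n^{v})$ copies, where $v=|V(F^{(k-1)}_{\ell,q})|$.

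\emph{Lifting.} Fix a copy of $F^{(k-1)}_{\ell,q}$ in $G$ with edges $e_1,\dots,e_m$. For each $i$ let $L_i:=N_H(e_i)$, a vertex set of size at least $(1-1/q+\eps')n$. We need disjoint copies $S_i$ of $F^{(k)}_{\ell,q-1}$ with $V(S_i)\subseteq L_i$.

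We would like to apply induction on $q$ to $H[L_i]$. This requires $\delta_\ell(H[L_i])\ge(1-(q-1)^{-r}+\eps)\binom{|L_i|-\ell}{r}$, which does not follow directly. This is the main obstacle.

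To overcome it, we plan to strengthen the inductive statement to a relative version: for any $W\subseteq V(H)$ with $|W|\ge(1-1/q+\eps)n$, the graph $H[W]$ contains $F^{(k)}_{\ell,q-1}$. This is plausible because an $\ell$-set $T\subseteq W$ misses at most $(q^{-r}-\eps)\binom{n}{r}$ $r$-sets. Relative to $\binom{|W|}{r}\ge((q-1)/q)^r\binom nr$, the missing proportion is at most $(q-1)^{-r}$ minus a margin, so $\delta_\ell(H[W])\ge(1-(q-1)^{-r}+\eps q^r/(q-1)^r)\binom{|W|-\ell}{r}$ up to $o(1)$.

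This computation works, so plain induction on $q$ applies to $H[L_i]$. Disjointness of the $S_i$ is then achieved greedily: remove the $O(1)$ already-used vertices, which costs only $o(1)$ in the degree condition, and apply the hypothesis again. Together with \cref{lem:C-not-colorable,obs:coloring-lower}, this yields \cref{thm:q-color-main}.
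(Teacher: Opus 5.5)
Your proposal is correct and is essentially the paper's proof: induction on $q+(k-\ell)$, an auxiliary $(k-1)$-graph $G$ whose minimum $\ell$-degree exceeds $1-q^{-(r-1)}$ by the same double count ($G$ is complete when $r=1$), and lifting a copy of $F^{(k-1)}_{\ell,q}$ by applying the $q-1$ case inside each link minus the already-used vertices, via the relative-degree computation. The differences are cosmetic. You define $G$ by the threshold $d_H(Y)\ge(1-1/q+\eps')n$, whereas the paper defines it by the embedding property and then proves that non-edges have small links; this is just the contrapositive of your lifting computation. Your supersaturation and Ramsey remarks are unnecessary, since a single copy of $F^{(k-1)}_{\ell,q}$ in $G$ suffices.
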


	\begin{proof}
		Let $r:=k-\ell$.  We prove the proposition
		by induction on $q+r$.  
 If $q=1$, then $F^{(k)}_{\ell,1}$ is a single edge, and the conclusion is immediate for large $n$.  Hence, assume $q\ge2$.		

First consider the case $r=1$, that is, $k=\ell+1$.  Let $t=|V(F^{(k)}_{\ell,q})|$.
		We claim that for every $\ell$-set $Y\subseteq V(H)$ and every
		$W\subseteq V(H)$ with $|W|\le t$, the induced $k$-graph
		$H[N_H(Y)\setminus W]$
		contains a copy of $F^{(k)}_{\ell,q-1}$.
	Let $U:=N_H(Y)\setminus W$.  Since $r=1$, the minimum degree
		assumption gives, for all sufficiently large $n$, $|U|\ge
		\left(1-\frac1q+\frac{\varepsilon}{2}\right)n$.
		For every $\ell$-set $T\subseteq U$, 
		\[
		d_{H[U]}(T)\ge d_H(T)-(n-|U|)
		\ge |U|-\frac nq+\frac{\varepsilon n}{2}
		\]
		for all sufficiently large $n$.  Since
	$|U|\ge
	\left(1-\frac1q+\frac{\varepsilon}{2}\right)n$, 
		there exists $\varepsilon'>0$, depending only on $q,\ell$ and
		$\varepsilon$, such that
		\[
		d_{H[U]}(T)
		\ge
		\left(1-\frac1{q-1}+\varepsilon'\right)(|U|-\ell).
		\]
 By the induction hypothesis
		applied with $q-1$, the $k$-graph $H[U]$ contains a copy of
		$F^{(k)}_{\ell,q-1}$.  This proves the claim.
		
		Now choose a set $S\subseteq V(H)$ of size $q(\ell-1)+1$.  This set
		plays the role of the initial complete $\ell$-graph
		$F^{(\ell)}_{\ell,q}=K^{(\ell)}_{q(\ell-1)+1}$.  Enumerate the
		$\ell$-subsets of $S$ as $Y_1,\ldots,Y_m$.  We process these
		$\ell$-sets one by one.  When $Y_i$ is processed, let $W$ be the
		set of vertices already used.  Since $|W|\le t$, the claim gives a
		copy $S_i$ of $F^{(k)}_{\ell,q-1}$ inside
		$H[N_H(Y_i)\setminus W]$.  Thus the copies $S_i$ can be chosen
		pairwise disjoint and disjoint from $S$, and for every
		$v\in V(S_i)$ we have
$Y_i\cup\{v\}\in E(H)$.
		This is exactly the recursive construction of $F^{(k)}_{\ell,q}$ in
		the case $k=\ell+1$.  Hence $H$ contains $F^{(k)}_{\ell,q}$.
		
Next, we consider the case $r\ge2$.  Again let
		$t:=|V(F^{(k)}_{\ell,q})|$.  We define an auxiliary $(k-1)$-graph
		$G$ on $V(H)$ as follows.  A $(k-1)$-set $Y\in E(G)$ if for
		every set $W\subseteq V(H)$ with $|W|\le t$, the induced $k$-graph $H[N_H(Y)\setminus W]$
		contains a copy of $F^{(k)}_{\ell,q-1}$.
		
		We first observe that if $G$ contains a copy of
		$F^{(k-1)}_{\ell,q}$, then $H$ contains a copy of
		$F^{(k)}_{\ell,q}$.  Indeed, let $S$ be a copy of
		$F^{(k-1)}_{\ell,q}$ in $G$, and enumerate its edges as
		$Y_1,\ldots,Y_m$.  We process these edges in this order.  When $Y_i$
		is processed, let $W$ be the set of vertices already used.  Since
		$Y_i\in E(G)$, we can choose a copy $S_i$ of
		$F^{(k)}_{\ell,q-1}$ inside $H[N_H(Y_i)\setminus W]$.  The copies
		$S_i$ are pairwise disjoint and disjoint from $S$, and every
		$v\in V(S_i)$ satisfies $Y_i\cup\{v\}\in E(H)$.
		Thus, the recursive construction of $F^{(k)}_{\ell,q}$ is shown in
		$H$.  Therefore, it suffices to show that $G$ contains $F^{(k-1)}_{\ell,q}$.  
		
Choose constants $0<\eta,\theta\ll\varepsilon$.  We first prove that
		every non-edge $Y$ of $G$ satisfies
		\begin{equation}
			d_H(Y)\le \left(\frac{q-1}{q}+\theta\right)n
			\label{eq:nonedge-Y-small-neighbourhood}
		\end{equation}
		for all sufficiently large $n$.  Suppose not and let $Y$ be a
		non-edge of $G$ with $d_H(Y)>((q-1)/q+\theta)n$.
		Let $W\subseteq V(H)$ be any set with $|W|\le t$, and set
		$U:=N_H(Y)\setminus W$.  Then, for all sufficiently large $n$,
		$|U|\ge((q-1)/q+\theta/2)n$.
For every $\ell$-set $T\subseteq U$, 
		\[
		d_{H[U]}(T)
		\ge
		\delta_\ell(H)-
		\left(\binom{n-\ell}{r}-\binom{|U|-\ell}{r}\right).
		\]
		Using the minimum degree assumption on $H$ and the lower bound on
		$|U|$, we obtain
		\[
		d_{H[U]}(T)
		\ge
		\left(1-(q-1)^{-r}+\varepsilon''\right)
		\binom{|U|-\ell}{r}
		\]
		for some $\varepsilon''>0$ depending only on
		$q,r,\theta$ and $\varepsilon$.  By the induction hypothesis applied
		with $q-1$, $H[U]$ contains a copy of
		$F^{(k)}_{\ell,q-1}$.
	Since this argument holds for every $W\subseteq V(H)$ with
		$|W|\le t$, the set $Y$ is an edge of $G$, contradicting our choice
		of $Y$.  This proves \eqref{eq:nonedge-Y-small-neighbourhood}.
		
We next prove that $G$ has large minimum $\ell$-degree. Suppose, for a contradiction, that some $\ell$-set $T\subseteq V(H)$ satisfies
$	d_G(T)\le
\left(1-q^{-(r-1)}+\eta\right)\binom{n-\ell}{r-1}$.	Double-counting pairs $(Z,v)$ with $Z\in\binom{V(H)\setminus T}{r-1}$ and $T\cup Z\cup\{v\}\in E(H)$ gives
\[
\sum_{Z\in\binom{V(H)\setminus T}{r-1}} d_H(T\cup Z)=r \cdot d_H(T).
\]
Hence, by the minimum $\ell$-degree assumption on $H$, the average of $d_H(T\cup Z)$ over all such $Z$ is at least $(1-q^{-r}+\eps/2)n$ for all large $n$.
On the other hand, if $T\cup Z\in E(G)$, then trivially
		$d_H(T\cup Z)\le n$.  If $T\cup Z\notin E(G)$, then
		\eqref{eq:nonedge-Y-small-neighbourhood} gives $d_H(T\cup Z)\le ((q-1)/q+\theta )n$.
		Using the assumed upper bound on $d_G(T)$, the average of $d_H(T\cup Z)$ over all such $Z$ is at most
		\[
		\left(1-q^{-(r-1)}+\eta\right)n
		+
		\left(q^{-(r-1)}-\eta\right)
		\left(\frac{q-1}{q}+\theta\right)n.
		\]
		If $\eta$ and $\theta$ are chosen sufficiently small in terms of
		$\varepsilon$, this is at most $(1-q^{-r}+{\varepsilon}/{4})n$,
a contradiction.  Thus, 
		\[
		\delta_\ell(G)\ge
		\left(1-q^{-(r-1)}+\eta\right)\binom{n-\ell}{r-1}.
		\]
		By the induction hypothesis, $G$ contains a copy of
		$F^{(k-1)}_{\ell,q}$.  As observed above, this copy lifts to a copy of
		$F^{(k)}_{\ell,q}$ in $H$.
	\end{proof}
	
\begin{proof}[Proof of~\cref{thm:q-color-main}]
Let $F^{(k)}_{\ell,q}$ be the $k$-graph defined in \cref{def:Fq}.
By \cref{lem:C-not-colorable}, it is not $q$-colorable, and hence
\cref{obs:coloring-lower} gives
\[
  \gamma^{(k)}_\ell(F^{(k)}_{\ell,q})\ge 1-q^{\ell-k}.
\]
The reverse inequality follows from \cref{prop:C-upper}.
\end{proof}

\section{Stability for the two-color forcing hypergraph}\label{sec:stability}

The exact value in \cref{thm:q-color-main} for $q=2$ is attained by the balanced bipartite construction: take all $k$-sets that meet both parts.  The next lemma extracts the structural feature of this construction that will be needed in the proof of non-principality.  It does not assert full edit-distance stability; rather, it guarantees two disjoint independent sets that together cover almost all vertices.

\begin{lemma}[Stability]\label{lem:stability}
Let $k>\ell>1$.  For every $\xi>0$ there exists $\eps>0$ such that the following holds for all sufficiently large $n$.  If $H$ is an $n$-vertex $F^{(k)}_{\ell,2}$-free $k$-graph satisfying
\begin{equation}\label{eq:stability-assumption}
  \delta_\ell(H)\ge \left(1-2^{\ell-k}-\eps\right)\binom{n-\ell}{k-\ell},
\end{equation}
then $H$ contains disjoint independent sets $A$ and $B$ with
$|A|,|B|\ge(1/2-\xi)n$.
\end{lemma}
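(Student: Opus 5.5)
The plan is to reuse the auxiliary-graph machinery from the proof of \cref{prop:C-upper} with $q=2$, and to run the degree averaging there in reverse. Let $r=k-\ell$ and $t=|V(F^{(k)}_{\ell,2})|$. I would first treat $r\ge 2$, and handle $r=1$ by the same argument applied directly to $\ell$-sets.

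Since $F^{(k)}_{\ell,1}$ is a single $k$-edge, the auxiliary $(k-1)$-graph $G$ from the proof of \cref{prop:C-upper} is defined as follows: $Y\in E(G)$ if and only if, for every $W$ with $|W|\le t$, the set $N_H(Y)\setminus W$ spans an edge of $H$. If $Y\notin E(G)$, there is a $W_Y$ with $|W_Y|\le t$ such that $N_H(Y)\setminus W_Y$ is independent in $H$. By the lifting observation in that proof, $G$ is $F^{(k-1)}_{\ell,2}$-free because $H$ is $F^{(k)}_{\ell,2}$-free. Hence, by \cref{thm:q-color-main} applied to $(k-1,\ell)$, we get $\delta_\ell(G)\le(1-2^{-(r-1)}+o(1))\binom{n-\ell}{r-1}$.

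The first key step is to find a non-edge $Y$ of $G$ with $d_H(Y)\ge(1/2-\xi/2)n$. Fix an $\ell$-set $T$ with $d_G(T)\le(1-2^{-(r-1)}+\eps)\binom{n-\ell}{r-1}$. As in \cref{prop:C-upper}, the average of $d_H(T\cup Z)$ over $Z\in\binom{V\setminus T}{r-1}$ is $r\,d_H(T)/\binom{n-\ell}{r-1}\ge(1-2^{-r}-2\eps)n$. The sets $Z$ with $T\cup Z\in E(G)$ contribute at most $n$ each, and they form at most a $1-2^{-(r-1)}+\eps$ fraction. So the remaining $Z$, which make up at least a $2^{-(r-1)}-\eps$ fraction, must have average $d_H(T\cup Z)\ge(1/2-O(\eps))n$. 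Choosing one such $Z$ gives the required $Y=T\cup Z$, and we set $A:=N_H(Y)\setminus W_Y$. This is an independent set with $|A|\ge(1/2-\xi)n$.

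Next we bound $|A|$ from above and locate the second set. For an $\ell$-set $T'\subseteq A$, no edge through $T'$ lies inside $A$, so
\[
d_H(T')\le\binom{n-\ell}{r}-\binom{|A|-\ell}{r}.
\]
Together with \eqref{eq:stability-assumption}, this forces $|A|\le(1/2+O(\eps^{1/r}))n$. Moreover, almost every $r$-set in $V\setminus T'$ that meets $V\setminus A$ must extend $T'$ to an edge.

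To produce $B$, I would repeat the first key step with $T$ chosen inside $V\setminus A$. This needs to be justified: the averaging above only supplies some $T$ of small $G$-degree, so I would rerun it for $\ell$-sets in $V\setminus A$. In the near-extremal regime one expects the low $G$-degree sets to be exactly the near-monochromatic ones, and I would show this by checking that an $\ell$-set meeting $A$ and $V\setminus A$ in a balanced way has $G$-degree too large to be compatible with the count. This yields a second independent set $A'$ with $|A'|\ge(1/2-\xi)n$.

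It remains to show that $|A'\cap A|$ is small, and then to take $B:=A'\setminus A$ after a small adjustment. Suppose instead that $A\cap A'$ and $A'\setminus A$ both had linear size. Then an $\ell$-set lying in $A\cap A'$ would have degree at most $\binom{n-\ell}{r}-\binom{|A\cup A'|-\ell}{r}$, where $|A\cup A'|\ge(1/2+c)n$ for some $c>0$, which contradicts \eqref{eq:stability-assumption}. A symmetric argument rules out $A'$ being almost contained in $A$, using the choice of $T$ outside $A$.

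The main obstacle I expect is exactly this step: guaranteeing that the second independent set lies essentially in $V\setminus A$ rather than overlapping $A$. The first choice I would try is to pick $Y$ with $Y\subseteq V\setminus A$. This is natural, since $N_H(Y)$ for such $Y$ should then be nearly complementary to $V\setminus A$. After that, the union bound above handles the overlap, and the set of size $(1/2-\xi)n$ is obtained after choosing $\eps\ll\xi$.
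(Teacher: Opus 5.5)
Your first half is essentially the paper's argument. That covers the auxiliary $(k-1)$-graph $G$, a low-$G$-degree $\ell$-set $T$, the averaging that produces a $G$-non-edge $Y$ with $d_H(Y)\ge(1/2-O(\eps))n$, the independent set $A=N_H(Y)\setminus W_Y$, and the upper bound on $|A|$.

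The genuine gap is the construction of $B$, and the choice you propose points the wrong way. Consider the extremal configuration, where the edges are all $k$-sets meeting both of two halves $P_1,P_2$. A $(k-1)$-set inside $P_2$ has link exactly $P_1$. Your $Y$ is such a set, so $A\approx P_1$ and $V\setminus A\approx P_2$. Consequently a $G$-non-edge $Y'\subseteq V\setminus A$ has $N_H(Y')\approx A$, which is just what your own heuristic (``nearly complementary to $V\setminus A$'') says. The new independent set $A'$ therefore essentially coincides with $A$. That is exactly the case ``$A'$ almost contained in $A$''. Your overlap argument does not cover it, and no symmetric argument can exclude it, because it is what actually happens.

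The claim that low-$G$-degree $\ell$-sets are near-monochromatic is also left unproved. A smaller point: the count $\binom{|A\cup A'|-\ell}{r}$ presumes $A\cup A'$ is independent. For $r\ge2$ you only know that $r$-sets inside $A$ or inside $A'$ are non-neighbors, though that still suffices for the contradiction you want there.

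The missing idea is to take the second $(k-1)$-set \emph{inside} $A$. Then disjointness is automatic: if $Y'\subseteq A$ and $v\in N_H(Y')\cap A$, the edge $Y'\cup\{v\}$ would lie in the independent set $A$.

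To find a good $Y'\subseteq A$, note that the induced graph $G[A]$ is still $F^{(k-1)}_{\ell,2}$-free. So \cref{thm:q-color-main} applied inside $A$ (with $m=|A|$) gives $T'\subseteq A$ with $d_{G[A]}(T')\le(1-2^{-(r-1)}+\eps)\binom{m-\ell}{r-1}$.

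Your own observation that almost every $r$-set meeting $V\setminus A$ extends $T'$ shows that the average of $d_H(T'\cup Z)$ over $Z\in\binom{A\setminus T'}{r-1}$ is at least $n-m-O(\eps n)$. On the other hand, every $(k-1)$-subset of $A$ has $H$-degree at most $n-m$. Hence some non-edge $Y'=T'\cup Z$ of $G[A]$, and therefore of $G$, satisfies $d_H(Y')\ge n-m-\zeta n$ for suitable $\eps\ll\zeta\ll\xi$. Then $B=N_H(Y')\setminus W_{Y'}$ is independent, disjoint from $A$, and of size at least $(1/2-\xi)n$ by your upper bound on $|A|$.

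For $r=1$ the same trick applies with a $(2\ell-1)$-set $X'\subseteq A$. There you should argue directly with the clique $K^{(\ell)}_{2\ell-1}$: every $(2\ell-1)$-set contains an $\ell$-set whose link, minus a bounded set, is independent. This is needed because the auxiliary graph would be an $\ell$-graph, for which \cref{thm:q-color-main} is not available.
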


\begin{proof}
Choose constants
$0<\eps\ll\eta\ll\theta\ll\zeta\ll\xi$, and let
$L=|V(F^{(k)}_{\ell,2})|$. Set $r=k-\ell$. We first handle the case $r=1$ and then assume $r\ge2$.

Suppose first that $r=1$.  Fix a set $X\subseteq V(H)$ of size $2\ell-1$.  We claim that there are an $\ell$-set $Y\subseteq X$ and a set $Q\subseteq V(H)$ with $|Q|\le L$ such that $H[N_H(Y)\setminus Q]$ contains no edge.  Otherwise, enumerate the $\ell$-subsets of $X$ and process them one at a time.  For each such set $Y$, take $Q$ to consist of $X$ and all vertices used at earlier steps, and choose an edge of $H[N_H(Y)\setminus Q]$.  The chosen edges are pairwise disjoint and, together with the complete $\ell$-graph on $X$, form a copy of $F^{(\ell+1)}_{\ell,2}$, a contradiction.

Choose $Y$ and $Q$ as in the claim and set $A=N_H(Y)\setminus Q$.  Then $A$ is independent and, by \eqref{eq:stability-assumption},
$|A|\ge(1/2-\xi/2)n$ for all sufficiently large $n$.  Choose a $(2\ell-1)$-set $X'\subseteq A$ and apply the same claim to $X'$.  This gives an $\ell$-set $Y'\subseteq X'$ and a set $Q'$ with $|Q'|\le L$ such that $B=N_H(Y')\setminus Q'$ is independent and $|B|\ge(1/2-\xi/2)n$.  Since $Y'\subseteq A$ and $A$ is independent, $N_H(Y')\cap A=\emptyset$; hence $A\cap B=\emptyset$.  This proves the case $r=1$.

Now let $r\ge2$.  Define an auxiliary $(k-1)$-graph $G$ on $V(H)$ by declaring a $(k-1)$-set $Y$ to be an edge if, for every $Q\subseteq V(H)$ with $|Q|\le L$, the induced $k$-graph $H[N_H(Y)\setminus Q]$ contains an edge.  If $G$ contained $F^{(k-1)}_{\ell,2}$, then one could process the edges of this copy and greedily attach a fresh edge inside the corresponding set $N_H(Y)$.  This is exactly the recursion in \cref{def:Fq} for $F^{(k)}_{\ell,2}$.  Thus $G$ is $F^{(k-1)}_{\ell,2}$-free.

By \cref{thm:q-color-main},
$\gamma^{(k-1)}_\ell(F^{(k-1)}_{\ell,2})=1-2^{-(r-1)}$.  Therefore, for all sufficiently large $n$, some $\ell$-set $T\subseteq V(H)$ satisfies
\begin{equation}\label{eq:low-G-star}
  d_G(T)\le\left(1-2^{-(r-1)}+\eta\right)
  \binom{n-\ell}{r-1}.
\end{equation}
Double-counting the pairs $(Z,v)$ for which
$Z\in\binom{V(H)\setminus T}{r-1}$ and
$T\cup Z\cup\{v\}\in E(H)$ gives
\begin{equation}\label{eq:first-average}
 \frac{1}{\binom{n-\ell}{r-1}}
 \sum_{Z\in\binom{V(H)\setminus T}{r-1}}d_H(T\cup Z)
 \ge \left(1-2^{-r}-2\eps\right)n.
\end{equation}
Here we used
$r\binom{n-\ell}{r}/\binom{n-\ell}{r-1}=n-k+1$.

We claim that some non-edge $Y=T\cup Z$ of $G$ satisfies
$d_H(Y)\ge(1/2-\theta)n$.  Indeed, otherwise \eqref{eq:low-G-star} would bound the average in \eqref{eq:first-average} from above by
\[
 \left(1-2^{-(r-1)}+\eta\right)n
 +\left(2^{-(r-1)}-\eta\right)\left(\frac12-\theta\right)n,
\]
which equals
$\bigl(1-2^{-r}+\eta/2-\theta(2^{-(r-1)}-\eta)\bigr)n$ and is smaller than the right-hand side of \eqref{eq:first-average}.  This proves the claim.

Because $Y\notin E(G)$, there is a set $Q$ with $|Q|\le L$ such that
$A=N_H(Y)\setminus Q$ is independent.  The claim gives
$|A|\ge(1/2-2\theta)n$ for large $n$.  On the other hand, if
$U\in\binom A\ell$, then every $r$-set in $A\setminus U$ is a non-neighbor of $U$.  Combining this with \eqref{eq:stability-assumption} yields
$\binom{|A|-\ell}{r}\le(2^{-r}+\eps)\binom{n-\ell}{r}$.
Consequently,
\begin{equation}\label{eq:A-size}
  \left(\frac12-2\theta\right)n\le |A|\le
  \left(\frac12+\theta\right)n
\end{equation}
for all sufficiently large $n$.

The induced auxiliary graph $G[A]$ is also $F^{(k-1)}_{\ell,2}$-free.  Writing $m=|A|$ and applying \cref{thm:q-color-main} inside $A$, we find an $\ell$-set $T'\subseteq A$ such that
\begin{equation}\label{eq:low-GA-star}
 d_{G[A]}(T')\le\left(1-2^{-(r-1)}+\eta\right)
 \binom{m-\ell}{r-1}.
\end{equation}
We next estimate the average of $d_H(T'\cup Z)$ over
$Z\in\binom{A\setminus T'}{r-1}$.  Since $A$ is independent, all
$\binom{m-\ell}{r}$ extensions of $T'$ that lie entirely in $A$ are missing.  Hence the number of missing extensions that are not entirely in $A$ is
\[
 D:=\binom{n-\ell}{r}-\binom{m-\ell}{r}-d_H(T')
 \le(2^{-r}+\eps)\binom{n-\ell}{r}-\binom{m-\ell}{r}.
\]
By \eqref{eq:A-size}, $m=(1/2+O(\theta))n$, so
$D=O((\theta+\eps)n^r)$.  The sum of $d_H(T'\cup Z)$ over
$Z\in\binom{A\setminus T'}{r-1}$ counts the edges containing $T'$ whose remaining vertices consist of $r-1$ vertices in $A$ and one vertex outside $A$.  There are
$(n-m)\binom{m-\ell}{r-1}$ possible extensions of this type, and at most $D$ of them are missing.  Therefore
\begin{equation}\label{eq:second-average}
 \frac{1}{\binom{m-\ell}{r-1}}
 \sum_{Z\in\binom{A\setminus T'}{r-1}}d_H(T'\cup Z)
 \ge n-m-O((\theta+\eps)n).
\end{equation}

Every $(k-1)$-set contained in $A$ has at most $n-m$ neighbors in $H$.  If every non-edge $Y'=T'\cup Z$ of $G[A]$ had
$d_H(Y')<n-m-\zeta n$, then \eqref{eq:low-GA-star} would make the average in \eqref{eq:second-average} at most
$n-m-\zeta(2^{-(r-1)}-\eta)n$, a contradiction.  Thus there is a non-edge $Y'$ of $G[A]$ with
$d_H(Y')\ge n-m-\zeta n$.

Since $G[A]$ is induced, $Y'$ is also a non-edge of $G$.  Hence some set $Q'$ with $|Q'|\le L$ makes
$B=N_H(Y')\setminus Q'$ independent.  Moreover, $B\cap A=\emptyset$, because $Y'\subseteq A$ and $A$ is independent.  Finally, by \eqref{eq:A-size},
$|B|\ge n-m-\zeta n-L\ge(1/2-\xi)n$ for large $n$, while
$|A|\ge(1/2-\xi)n$.  This completes the proof.
\end{proof}

\section{A non-principal pair}\label{sec:nonprincipal}

We now combine the stability lemma with a tournament construction of Mubayi and Zhao~\cite{MubayiZhao2007}.  For $t\ge k$, let $K^k(t,t)$ be the $k$-graph with two disjoint vertex classes of size $t$ whose edges are precisely the $k$-sets meeting one of the two classes in exactly one vertex.

\begin{lemma}\label{lem:K-lower}
Let $k\ge3$ and $1<\ell<k$, and put $r=k-\ell$.  There exists $t=t(k)$ such that
\[
  \gamma^{(k)}_\ell(K^k(t,t))\ge1-2^{-r}.
\]
\end{lemma}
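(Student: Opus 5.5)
The plan is to adapt the random-tournament construction of Mubayi and Zhao. For each large $n$ I will build an $n$-vertex $k$-graph $H_n$ from a uniformly random tournament $\vec T$ on $V$ and show two things: with high probability every $\ell$-set has degree at least $(1-2^{-r}-o(1))\binom{n-\ell}{r}$, and $H_n$ is $K^k(t,t)$-free for every realisation of $\vec T$, once $t=t(k)$ is large enough. The edge rule I would try first is a parity rule. A $k$-set $K$ is a \emph{non-edge} iff a prescribed set of $r$ vertices of $K$ all have even out-degree inside $\vec T[K]$, and $K$ is an edge otherwise. I do not yet know the right way to prescribe these vertices; that choice is the main risk, discussed at the end.

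\textbf{Degree bound.} Fix an $\ell$-set $T$ and an $r$-set $S$ disjoint from it. Condition on all arcs except those between $S$ and $T$. For each $s\in S$, the parity of the out-degree of $s$ in $\vec T[T\cup S]$ is then determined by the $\ell\ge1$ fresh arcs between $s$ and $T$. These arcs are independent across different $s$, so the parities of the vertices of $S$ are independent uniform bits. The event ``the prescribed vertices all have even out-degree'' therefore has probability at most $2^{-r}$ whenever it constrains $r$ vertices of $S$; a constraint involving a vertex of $T$ only lowers the probability further. Hence the expected number of non-edges through $T$ is at most $(2^{-r}+o(1))\binom{n-\ell}{r}$.

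To pass to all $T$ simultaneously, I would use a bounded-differences (McDiarmid) argument. Flipping one arc $uv$ changes the count only for $S$ containing $u$ or $v$, that is, for $O(n^{r-1})$ sets. This gives concentration with failure probability $\exp(-\Omega(n))$ per $T$, and a union bound over the $\binom n\ell$ choices of $T$ completes this step.

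\textbf{$K^k(t,t)$-freeness.} Take any copy with classes $A,B$, $|A|=|B|=t$. Apply Ramsey's theorem for tournaments together with a Ramsey argument on the bipartite arc pattern between $A$ and $B$. This yields $A'\subseteq A$ and $B'\subseteq B$ of size at least $k$ such that $\vec T[A']$ and $\vec T[B']$ are transitive, with orders compatible with a fixed ordering of $V$, and the orientation of an arc between $A'$ and $B'$ depends only on the relative positions of its endpoints. After this step every $k$-set meeting $B'$ in one vertex has a determined isomorphism type. I then need to exhibit, among $k$-sets with exactly one vertex in one class, one whose prescribed vertices all have even out-degree. Such a set would be a required edge of $K^k(t,t)$ that is missing from $H_n$, which is the contradiction. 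The search for this set is a finite case analysis over the boundedly many homogeneous patterns.

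\textbf{Main obstacle.} Everything hinges on choosing the prescribed $r$ vertices so that both halves work at once. The naive rule ``all vertices of $K$ even'' fails. When $\binom k2$ is odd, no $k$-vertex tournament has all out-degrees even, so $H_n$ would be complete. Rules that single out, say, the minimum vertex of $K$ in a fixed ordering of $V$ break the uniformity in $T$ needed for the degree bound. My first attempt would be to take, as the prescribed set, $r$ vertices of $K$ determined by the tournament structure itself, for instance by out-degree rank inside $\vec T[K]$. The aim is that the independence argument still gives probability $2^{-r}$ for every $T$, while each homogeneous pattern from the Ramsey step still contains a non-edge with one vertex in one class. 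If no single rule achieves both, I would fall back on Mubayi and Zhao's exact tournament rule for $K^k(t,t)$ and redo the degree computation for general $\ell$ in place of codegree.
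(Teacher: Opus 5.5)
There is a genuine gap: you never specify $H_n$, and the class of rules you are choosing from cannot work in general.

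\textbf{The freeness half fails for parity rules.} Suppose $K$ is a non-edge only when some $r$ of its vertices have even out-degree in $\vec T[K]$. A transitive $k$-tournament has out-degrees $0,1,\dots,k-1$, and only $\lceil k/2\rceil$ of these are even. So once $r>\lceil k/2\rceil$, every transitive $k$-set is an edge. Every tournament on $2^{2t-1}$ vertices contains a transitive subtournament on $2t$ vertices, and all its $k$-subsets are transitive. Hence $H_n\supseteq K^{(k)}_{2t}\supseteq K^k(t,t)$ for every $t$ and every realisation of $\vec T$, for example when $(k,\ell)=(6,2)$. No choice of the prescribed vertices repairs this.

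\textbf{The degree half fails for rank-based choices.} Your computation giving $2^{-r}$ is valid only for a prescribed set that does not depend on the $S$--$T$ arcs, and out-degree rank is correlated with parity. Already for $(k,\ell)=(3,2)$, the rule ``the top-ranked vertex has even out-degree'' makes exactly the transitive triples non-edges. Then every pair has non-edge probability $3/4>2^{-r}=1/2$.

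Separately, your McDiarmid step as written is too weak. A Lipschitz constant $O(n^{r-1})$ on all $\binom n2$ arcs gives an exponent $O(1)$, not $\Omega(n)$. You would need the sharper fact that arcs avoiding $T$ affect only $O(n^{r-2})$ sets $S$. Finally, the $K^k(t,t)$-freeness is deferred to a case analysis that is never carried out.

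\textbf{The missing idea} is to replace an exact parity condition on $K$ by a monotone witness condition. The paper takes the $3$-graph $G_3$ of triples $i<j<m$ such that the arcs from $i$ to $j$ and from $i$ to $m$ point in opposite directions. Equivalently, the minimum vertex has odd out-degree in the triple, so singling out the minimum vertex, which you dismissed, is exactly what is used. It then lets $H_n$ consist of the $k$-sets containing an edge of $G_3$.

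The degree bound needs no uniformity, only a one-sided estimate. Since $\ell\ge2$, $T$ contains a pair $P$. Given the arc on $P$, the events $x\in N_3(P)$ are independent fair coins, so Chernoff gives $|N_3(P)|\ge(1/2-\xi)n$ for all pairs at once. Every $r$-set meeting $N_3(P)$ completes $T$ to an edge, so $d_{H_n}(T)\ge(1-(1/2+\xi+o(1))^r)\binom{n-\ell}{r}$.

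For freeness, note that $G_3$ is $K^{(3)}_4$-free, because the three arcs from $a$ to $b,c,d$ cannot be pairwise differently oriented. Set $t=2R_3(4,k-1)$ and take the minimum vertex $a_0$ of $A\cup B$, say in $A$. Pass to a half of $B$ on which the arcs from $a_0$ all agree, and by Ramsey find there a $(k-1)$-set $B_0$ that is independent in $G_3$. Then $\{a_0\}\cup B_0$ contains no edge of $G_3$, so it is a required edge of $K^k(t,t)$ missing from $H_n$.

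Your fallback to Mubayi and Zhao's rule points in this direction, but as written the proposal does not prove the lemma.
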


\begin{proof}
Let $R$ be a random tournament on $[n]$.  Define a $3$-graph $G_3$ as follows: for $i<j<m$, declare $\{i,j,m\}$ to be an edge when exactly one of the tournament edges $i\to j$ and $i\to m$ is present.  For a pair $P$, let
$N_3(P)=\{x\notin P:P\cup\{x\}\in E(G_3)\}$.  Conditional on the orientation of the tournament edge spanned by $P$, the indicators of the events $x\in N_3(P)$ are independent Bernoulli random variables with parameter $1/2$.  Chernoff's inequality and a union bound therefore show that, for every fixed $\xi>0$ and all sufficiently large $n$, one can choose $R$ so that
\begin{equation}\label{eq:pairs-good}
  |N_3(P)|\ge\left(\frac12-\xi\right)n
  \quad\text{for every }P\in\binom{[n]}2.
\end{equation}
Fix such a tournament, and let $H_n$ be the $k$-graph whose edges are the $k$-sets containing an edge of $G_3$.

The $3$-graph $G_3$ is $K^{(3)}_4$-free.  Indeed, if
$a<b<c<d$ and the triples $abc$, $abd$, and $acd$ were all edges, then the three orientations from $a$ to $b,c,d$ would have to be pairwise different, which is impossible.  Let $R_3(4,k-1)$ denote the least integer such that every $3$-graph on that many vertices contains either a $K^{(3)}_4$ or an independent set of size $k-1$, and set
$t=2R_3(4,k-1)$.

We claim that $H_n$ is $K^k(t,t)$-free.  Suppose that $A$ and $B$ are disjoint $t$-sets, and let $a_0$ be the smallest vertex of $A\cup B$; by symmetry, assume that $a_0\in A$.  At least half of the vertices of $B$ have the same orientation toward $a_0$.  Among those vertices, Ramsey's theorem gives a $(k-1)$-set $B_0$ spanning no edge of $G_3$.  Every triple consisting of $a_0$ and two vertices of $B_0$ is also absent from $G_3$, since the two tournament edges from $a_0$ have the same orientation.  Thus $\{a_0\}\cup B_0$ contains no edge of $G_3$ and is a non-edge of $H_n$.  It is one of the edges required in a copy of $K^k(t,t)$ with classes $A$ and $B$, proving the claim.

It remains to estimate the minimum $\ell$-degree of $H_n$.  Let
$T\in\binom{[n]}\ell$.  If $T$ already contains an edge of $G_3$, then every $r$-set extends $T$ to an edge of $H_n$.  Otherwise choose a pair $P\subseteq T$.  Every $r$-set meeting $N_3(P)$ extends $T$ to an edge, and \eqref{eq:pairs-good} gives
\[
 d_{H_n}(T)\ge
 \left(1-\left(\frac12+\xi+o(1)\right)^r\right)
 \binom{n-\ell}{r}.
\]
Since $\xi>0$ is arbitrary, the desired lower bound follows.
\end{proof}

\begin{lemma}\label{lem:joint-positive}
Let $k\ge3$ and $1<\ell<k$.  If $t=t(k)$ is chosen as in \cref{lem:K-lower}, then
\[
 \gamma^{(k)}_\ell\bigl(\{K^k(t,t),F^{(k)}_{\ell,2}\}\bigr)>0.
\]
\end{lemma}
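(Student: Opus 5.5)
We need a construction: an $n$-vertex $k$-graph with minimum $\ell$-degree $\Omega(n^r)$ that is free of both $K^k(t,t)$ and $F^{(k)}_{\ell,2}$. The natural plan is to take the tournament $k$-graph $H_n$ from \cref{lem:K-lower}, which is already $K^k(t,t)$-free, and intersect it with a structure that kills $F^{(k)}_{\ell,2}$ while retaining positive density. Since $F^{(k)}_{\ell,2}$ is not $2$-colorable (\cref{lem:C-not-colorable}), any $k$-graph admitting a proper $2$-coloring is $F^{(k)}_{\ell,2}$-free. So I would fix a balanced partition $V=V_1\cup V_2$ of $[n]$ and let $H'_n$ consist of those edges of $H_n$ that meet both $V_1$ and $V_2$. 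Then $H'_n\subseteq H_n$ is $K^k(t,t)$-free, and the partition is a proper $2$-coloring, so $H'_n$ is $F^{(k)}_{\ell,2}$-free.

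The main step is a lower bound for $\delta_\ell(H'_n)$. Fix an $\ell$-set $T$ and pick a pair $P\subseteq T$. By \eqref{eq:pairs-good}, $|N_3(P)|\ge(1/2-\xi)n$, so $N_3(P)$ meets at least one part in at least $(1/4-\xi)n$ vertices; say $|N_3(P)\cap V_j|\ge(1/4-\xi)n$.

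There are two cases. If $T$ meets both parts, then every $k$-set $T\cup S$ automatically meets both parts, and it suffices that $S$ meets $N_3(P)$. This happens for a positive fraction of $r$-sets. If $T\subseteq V_i$, we need $S$ to contain a vertex of $V_{3-i}$ and also a vertex of $N_3(P)$. We obtain enough such $S$ by choosing one vertex $x\in N_3(P)\cap V_{3-i}$, if this set is large, which makes both conditions hold at once.

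The expected obstacle is the case where $N_3(P)\cap V_{3-i}$ is small. I would handle it in one of two ways. The first option is to choose the random tournament and the partition jointly. Conditional on the orientation of $P$, each $x\in V_{3-i}$ lies in $N_3(P)$ independently with probability $1/2$. Chernoff's inequality plus a union bound over all pairs and both parts then gives $|N_3(P)\cap V_{j}|\ge(1/4-\xi)n$ for every $P$ and every $j$. Concretely, I fix the partition first (for instance $V_1$ the odd integers) and note that the independence argument from \cref{lem:K-lower} applies within each part. With this strengthened estimate, every $T$ has at least $(1/4-\xi)n\binom{n-\ell-1}{r-1}/r$ good extensions, which is $\Omega(\binom{n-\ell}{r})$. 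The second option, if $r=1$ needs care, is the same argument directly: the single added vertex $x$ lies in $N_3(P)\cap V_{3-i}$, and this set has linear size.

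Combining the two cases, the minimum $\ell$-degree of $H'_n$ is at least $c\binom{n-\ell}{r}$ for some constant $c=c(k,\ell)>0$, for all large $n$. Together with the freeness of $H'_n$ from both forbidden graphs, this gives the lemma.
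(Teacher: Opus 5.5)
Your proposal is correct and is essentially the paper's proof: you keep only the edges of the tournament $k$-graph that cross a fixed balanced partition, which makes it $2$-colorable (hence $F^{(k)}_{\ell,2}$-free) while keeping it $K^k(t,t)$-free, and you rerun the Chernoff argument inside each part so that $|N_3(P)\cap V_j|$ is linear for every pair $P$ and both parts, exactly as in \eqref{eq:split-neighborhood}, which handles the case $T\subseteq V_i$. The differences are cosmetic: you use the constant $(1/4-\xi)n$ instead of the paper's $n/5$, and you count extensions through a vertex of $N_3(P)\cap V_{3-i}$ directly rather than via the $1-(4/5)^r$ estimate; your ``second option'' for $r=1$ is just a special case of the first and is not needed.
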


\begin{proof}
Fix a balanced partition $[n]=A\cup B$ and expose a random tournament as in the proof of \cref{lem:K-lower}.  The same concentration argument, now applied separately inside $A$ and $B$, gives a tournament for which
\begin{equation}\label{eq:split-neighborhood}
 |N_3(P)\cap A|\ge n/5
 \quad\text{and}\quad
 |N_3(P)\cap B|\ge n/5
 \quad\text{for every }P\in\binom{[n]}2.
\end{equation}
Let $H_n$ again consist of the $k$-sets containing an edge of $G_3$, and let $H_n^*$ be the subgraph formed by the edges that meet both $A$ and $B$.

The $k$-graph $H_n^*$ is $2$-colorable, so it is $F^{(k)}_{\ell,2}$-free by \cref{lem:C-not-colorable}.  It is also $K^k(t,t)$-free because it is a subgraph of $H_n$.  To estimate its minimum $\ell$-degree, fix $T\in\binom{[n]}\ell$ and a pair $P\subseteq T$.  If $T$ meets both parts, then every $r$-set meeting $N_3(P)$ gives an edge of $H_n^*$, and \eqref{eq:split-neighborhood} yields
$d_{H_n^*}(T)\ge(1-(3/5+o(1))^r)\binom{n-\ell}{r}$.
If $T\subseteq A$, it is enough that the extending set meet $N_3(P)\cap B$, giving
$d_{H_n^*}(T)\ge(1-(4/5+o(1))^r)\binom{n-\ell}{r}$; the case $T\subseteq B$ is symmetric.  Hence, for all sufficiently large $n$,
\[
 \delta_\ell(H_n^*)\ge
 \frac12\left(1-\left(\frac45\right)^r\right)
 \binom{n-\ell}{r}>0,
\]
which proves the lemma.
\end{proof}

\begin{proof}[Proof of \cref{thm:nonprincipal-main}]
Set $r=k-\ell$, let $t=t(k)$ be given by \cref{lem:K-lower}, and set
$F_1=K^k(t,t)$ and $F_2=F^{(k)}_{\ell,2}$.  By \cref{lem:joint-positive},
$\gamma^{(k)}_\ell(\{F_1,F_2\})>0$.  Moreover, \cref{lem:K-lower,thm:q-color-main} give
\[
 \gamma^{(k)}_\ell(F_1)\ge1-2^{-r}
 \quad\text{and}\quad
 \gamma^{(k)}_\ell(F_2)=1-2^{-r}.
\]
It remains to prove that the density of the pair is strictly smaller than $1-2^{-r}$.

Choose $\xi>0$ sufficiently small in terms of $k,\ell$, and $t$.  Let $\eps_0>0$ be supplied by \cref{lem:stability}, and then choose
$0<\eps\le\eps_0$ sufficiently small.  We claim that every sufficiently large $n$-vertex $k$-graph $H$ satisfying
\begin{equation}\label{eq:final-degree}
 \delta_\ell(H)\ge\left(1-2^{-r}-\eps\right)
 \binom{n-\ell}{r}
\end{equation}
contains $F_1$ or $F_2$.

Assume that $H$ is $F_2$-free.  By \cref{lem:stability}, there are disjoint independent sets $A,B\subseteq V(H)$ with
$(1/2-\xi)n\le |A|,|B|\le(1/2+\xi)n$.  We show that $H$ contains $K^k(t,t)$ with one class in $A$ and the other in $B$.

Let $\calM_A$ be the family of missing $k$-sets having $k-1$ vertices in $A$ and one in $B$.  For $T\in\binom A\ell$, let $X_T$ be the number of non-neighbors of $T$ that are not contained entirely in $A$.  Since $A$ is independent, \eqref{eq:final-degree} gives
\begin{equation*}
 X_T\le(2^{-r}+\eps)\binom{n-\ell}{r}
      -\binom{|A|-\ell}{r}
 \le C_0(\xi+\eps)n^r
\end{equation*}
for a constant $C_0=C_0(k,\ell)$.  Counting pairs $(T,e)$ with
$T\subseteq e\cap A$, where $T\in\binom A\ell$ and $e\in\calM_A$, yields
$|\calM_A|\le C_1(\xi+\eps)n^k$ for a constant $C_1=C_1(k,\ell)$.  By symmetry, the same bound holds for the family $\calM_B$ of missing $k$-sets having $k-1$ vertices in $B$ and one in $A$.

Choose $A_0\in\binom At$ and $B_0\in\binom Bt$ independently and uniformly.  For each $e\in\calM_A\cup\calM_B$, the probability that $e\subseteq A_0\cup B_0$ is at most $C_2n^{-k}$, where $C_2=C_2(k,t)$.  Hence the expected number of missing edges required for a copy of $K^k(t,t)$ on $A_0\cup B_0$ is at most
$2C_1C_2(\xi+\eps)$.  Choosing $\xi$ and then $\eps$ so that this quantity is less than one, we obtain $A_0$ and $B_0$ for which no required edge is missing.  Thus $H[A_0\cup B_0]$ contains $K^k(t,t)$, proving the claim.

The preceding claim implies that $\gamma^{(k)}_\ell(\{F_1,F_2\})
 \le1-2^{-r}-\eps<1-2^{-r}$.
Together with the positive lower bound and the estimates for the individual forbidden graphs, this proves
\[
 0<\gamma^{(k)}_\ell(\{F_1,F_2\})
 <\min\{\gamma^{(k)}_\ell(F_1),\gamma^{(k)}_\ell(F_2)\}.
\qedhere\]
\end{proof}

\section{Concluding remarks}\label{sec:concluding-remarks}

The condition $\ell>k/2$ in \cref{thm:rational-main} is used only in the lower-bound construction.  It guarantees that a $k$-set has at most one color of multiplicity at least $\ell$, and hence a single cyclic interval determines whether the set is an edge.  Removing this uniqueness condition appears to be the main obstacle to extending the rational realization theorem to the full range.

\begin{conjecture}\label{conj:rational-full-range}
Let $k\ge3$ and $1<\ell<k$.  Every rational number $\alpha\in[0,1)$ is equal to $\gamma^{(k)}_\ell(\calF)$ for some finite family $\calF$ of $k$-graphs.
\end{conjecture}

A sharper problem is to determine the set of values realized by a single
forbidden $k$-graph.  The color-forcing construction gives the infinite
sequence $1-q^{\ell-k}$ for every $k>\ell\ge1$, whereas
\cref{thm:rational-main} realizes all rational values by finite families
when $\ell>k/2$.  For $\ell=1$, the parameter
$\gamma^{(k)}_1$ is the classical Tur\'an density $\pi$, so the corresponding
single-forbidden problem belongs to the classical hypergraph Tur\'an density
spectrum.  It would be particularly interesting to decide whether, for any
fixed pair $(k,\ell)$ with $k>\ell>1$, every rational number in $[0,1)$ can
be realized by a single forbidden $k$-graph.

\section*{Acknowledgments and AI disclosure}
Quanyu Tang would like to thank Professor Guanghui Wang for his warm hospitality during Tang's visit to Shandong University. Wenling Zhou was supported by the National Natural Science Foundation of China (12401457), the China Postdoctoral Science Foundation (2024M761780), the Natural Science Foundation of Shandong Province (ZR2024QA067), and the Young Talent of Lifting Engineering for Science and Technology in Shandong, China (SDAST2025QTA074).

The authors used AI for language editing and exploratory discussion of proof presentation.  All mathematical statements, proofs, and references were independently written and finalized by the authors, who take full responsibility for the content of the paper.

\bibliographystyle{abbrv}
\bibliography{ref}

@article{Balogh2002,
  author  = {Balogh, J{\'o}zsef},
  title   = {The {Tur{\'a}n} density of triple systems is not principal},
  journal = {J. Combin. Theory Ser. A},
  volume  = {100},
  number  = {1},
  pages   = {176--180},
  year    = {2002}
}

@incollection{BaloghClemenLidicky2022,
  author    = {Balogh, J{\'o}zsef and Clemen, Felix Christian and Lidick{\'y}, Bernard},
  title     = {Hypergraph {Tur{\'a}n} problems in {$\ell_2$}-norm},
  booktitle = {Surveys in Combinatorics 2022},
  series    = {London Math. Soc. Lecture Note Ser.},
  volume    = {481},
  pages     = {21--63},
  publisher = {Cambridge Univ. Press},
  address   = {Cambridge},
  year      = {2022}
}

@article{ErdosSimonovits1966,
  author  = {Erd{\H{o}}s, P. and Simonovits, M.},
  title   = {A limit theorem in graph theory},
  journal = {Studia Sci. Math. Hungar.},
  volume  = {1},
  pages   = {51--57},
  year    = {1966}
}

@article{ErdosSos1982,
  author  = {Erd{\H{o}}s, P. and S{\'o}s, Vera T.},
  title   = {On {Ramsey--Tur{\'a}n} type theorems for hypergraphs},
  journal = {Combinatorica},
  volume  = {2},
  number  = {3},
  pages   = {289--295},
  year    = {1982}
}

@article{ErdosStone1946,
  author  = {Erd{\H{o}}s, P. and Stone, A. H.},
  title   = {On the structure of linear graphs},
  journal = {Bull. Amer. Math. Soc.},
  volume  = {52},
  pages   = {1087--1091},
  year    = {1946}
}

@article{FranklRodl1984,
  author  = {Frankl, Peter and R{\"o}dl, Vojt{\v{e}}ch},
  title   = {Hypergraphs do not jump},
  journal = {Combinatorica},
  volume  = {4},
  number  = {2--3},
  pages   = {149--159},
  year    = {1984}
}

@article{Langliling23,
  author  = {Lang, Richard},
  title   = {Tiling dense hypergraphs},
  journal = {arXiv preprint},
  note    = {arXiv:2308.12281},
  year    = {2023}
}

@article{GaoPikhurkoRongSun2026,
  author  = {Gao, Jun and Pikhurko, Oleg and Rong, Mingyuan and Sun, Shumin},
  title   = {Rational codegree {Tur{\'a}n} density of hypergraphs},
  journal = {arXiv preprint},
  note    = {arXiv:2601.00758},
  year    = {2026}
}

@article{AiDingLiuYang2026,
  author  = {Ai, Jiangdong and Ding, Laihao and Liu, Hong and Yang, Haotian},
  title   = {Tree suspensions and transfer functions for single degree {Tur{\'a}n}
spectra},
  journal = {arXiv preprint},
  note    = {arXiv:2607.06518},
  year    = {2026}
}

@article{HalfpapLemonsPalmer2025,
  author  = {Halfpap, Anastasia and Lemons, Nathan and Palmer, Cory},
  title   = {Positive co-degree density of hypergraphs},
  journal = {J. Graph Theory},
  volume  = {110},
  number  = {2},
  pages   = {209--222},
  year    = {2025}
}

@incollection{Keevash2011,
  author    = {Keevash, Peter},
  title     = {Hypergraph {Tur{\'a}n} problems},
  booktitle = {Surveys in Combinatorics 2011},
  series    = {London Math. Soc. Lecture Note Ser.},
  volume    = {392},
  pages     = {83--139},
  publisher = {Cambridge Univ. Press},
  address   = {Cambridge},
  year      = {2011}
}

@article{KeevashZhao2007,
  author  = {Keevash, Peter and Zhao, Yi},
  title   = {Codegree problems for projective geometries},
  journal = {J. Combin. Theory Ser. B},
  volume  = {97},
  number  = {6},
  pages   = {919--928},
  year    = {2007},
  doi     = {10.1016/j.jctb.2007.01.004}
}

@article{LoMarkstrom2014,
  author  = {Lo, Allan and Markstr{\"o}m, Klas},
  title   = {{$\ell$}-degree {Tur{\'a}n} density},
  journal = {SIAM J. Discrete Math.},
  volume  = {28},
  number  = {3},
  pages   = {1214--1225},
  year    = {2014}
}

@article{MubayiPikhurko2008,
  author  = {Mubayi, Dhruv and Pikhurko, Oleg},
  title   = {Constructions of non-principal families in extremal hypergraph theory},
  journal = {Discrete Math.},
  volume  = {308},
  number  = {19},
  pages   = {4430--4434},
  year    = {2008}
}

@article{Pikhurko2012,
  author   = {Pikhurko, Oleg},
  title    = {On possible {T}ur{\'a}n densities},
  journal  = {Israel J. Math.},
  volume   = {201},
  number   = {1},
  pages    = {415--454},
  year     = {2014},
  doi      = {10.1007/s11856-014-0031-5}
}

@article{BS:84,
  author   = {Brown, W. G. and Simonovits, M.},
  title    = {Digraph extremal problems, hypergraph extremal problems, and the densities of graph structures},
  journal  = {Discrete Math.},
  volume   = {48},
  number   = {2--3},
  pages    = {147--162},
  year     = {1984},
  doi      = {10.1016/0012-365X(84)90178-X}
}

@article{MubayiRodl2002,
  author  = {Mubayi, Dhruv and R{\"o}dl, Vojt{\v{e}}ch},
  title   = {On the {Tur{\'a}n} number of triple systems},
  journal = {J. Combin. Theory Ser. A},
  volume  = {100},
  number  = {1},
  pages   = {136--152},
  year    = {2002}
}

@article{MubayiZhao2007,
  author  = {Mubayi, Dhruv and Zhao, Yi},
  title   = {Co-degree density of hypergraphs},
  journal = {J. Combin. Theory Ser. A},
  volume  = {114},
  number  = {6},
  pages   = {1118--1132},
  year    = {2007}
}

@article{Reiher2020,
  author  = {Reiher, Christian},
  title   = {Extremal problems in uniformly dense hypergraphs},
  journal = {European J. Combin.},
  volume  = {88},
  pages   = {103117},
  year    = {2020}
}

@article{Sidorenko1995,
  author  = {Sidorenko, Alexander},
  title   = {What we know and what we do not know about {Tur{\'a}n} numbers},
  journal = {Graphs Combin.},
  volume  = {11},
  number  = {2},
  pages   = {179--199},
  year    = {1995}
}

\end{document}